\documentclass[11pt]{article}
\usepackage[T1]{fontenc}
\usepackage{lmodern}
\usepackage[margin=1in]{geometry}
\usepackage{amsmath,amssymb,amsthm,mathtools}
\usepackage{microtype}

\usepackage[hidelinks]{hyperref}
\hypersetup{pdftitle={Superpolynomial lower bounds for vertex numbers of real projective space triangulations and a topological Figiel--Lindenstrauss--Milman theorem}}
\usepackage{xcolor}

\newtheorem{theorem}{Theorem}
\newtheorem{corollary}[theorem]{Corollary}
\newtheorem{lemma}[theorem]{Lemma}
\theoremstyle{remark}
\newtheorem{remark}[theorem]{Remark}
\DeclareMathOperator{\ind}{ind}
\DeclareMathOperator{\coind}{coind}
\DeclareMathOperator{\Hess}{Hess}
\DeclareMathOperator{\tr}{tr}
\DeclareMathOperator{\diag}{diag}
\newcommand{\R}{\mathbb R}
\newcommand{\Z}{\mathbb Z}
\newcommand{\RP}{\mathbb {RP}}
\newcommand{\Czero}{C_0}
\newcommand{\nuplus}{\nu_{\geq 0}}

\title{Superpolynomial lower bounds for vertex numbers of\\
real projective space triangulations via a\\
topological Figiel--Lindenstrauss--Milman theorem}

\author{
Florian Frick\thanks{Department of Mathematical Sciences, Carnegie Mellon University. Email: \texttt{frick@cmu.edu}.}
\and
Kaave Hosseini\thanks{Department of Computer Science, University of Rochester. Email: \texttt{kaave.hosseini@rochester.edu}.}
\and
Eric Myzelev\thanks{Department of Mathematical Sciences, Carnegie Mellon University. Email: \texttt{etmyzele@andrew.cmu.edu}.}
\and
Arya Narnapatti\thanks{Department of Mathematical Sciences, Carnegie Mellon University. Email: \texttt{anarnapa@andrew.cmu.edu}.}
\and
Aliaksei Vasileuski\thanks{Department of Mathematical Sciences, Carnegie Mellon University. Email: \texttt{avasileu@andrew.cmu.edu}.}
}

\date{September 9, 2026}

\begin{document}
\maketitle
\vspace{-1.5em}

\begin{abstract}
We prove that every simplicial triangulation of real projective
$d$-space has $\exp(\Omega(\sqrt d))$ vertices. Together with known
constructions, this determines the minimum vertex number as
$\mu_d=\exp(d^{1/2+o(1)})$. The result follows from a topological generalization of the
Figiel--Lindenstrauss--Milman inequality, answering a recent question of Frick,
Hosseini, and Vasileuski: a finite strongly regular CW
complex with a free cellular involution, $v$ vertices, and $f$ maximal
cells has $\Z/2$-index at most $O(\log v\log f)$. We bound the dimensions
of Morse cells by a trace estimate for a constrained Hessian, obtaining
a Morse-theoretic proof of the classical inequality for centrally
symmetric polytopes. As further applications of this
inequality, we give an $\exp(\Omega(\sqrt t))$ lower bound for the order
of a triangle-free topologically $t$-chromatic graph and bound the index of sign complexes by $O(d\log^2 N)$ for total
matrices and $O(d\log^3 N)$ for partial matrices, where $N\geq2$
is the number of columns and $d\geq1$ is the VC dimension.
\end{abstract}

\section{Introduction}

How few vertices can a simplicial triangulation of $\RP^d$ have?
Write $\mu_d$ for this minimum. The classical lower bound of
Arnoux--Marin~\cite{AM} is $\mu_d\geq\binom{d+2}{2}+1$ for $d\geq3$.
The subexponential constructions of
Adiprasito--Avvakumov--Karasev~\cite{AAK}, with the improvement of
Frankl--Pach--P\'alv\"olgyi~\cite[equation~(3)]{FPP}, give
$\mu_d\leq\exp(O(\sqrt{d\log d}))$.
We prove the superpolynomial lower bound
$\mu_d\geq\exp(c\sqrt d)$ for an absolute $c>0$ and all sufficiently
large $d$. Consequently,
\[
 c\sqrt d\leq\log\mu_d\leq C\sqrt{d\log d},
 \qquad \mu_d=\exp(d^{1/2+o(1)})
 \qquad(d\longrightarrow\infty).
\]
The power $1/2$ of $d$ in the exponent is thus determined; a factor
$\sqrt{\log d}$ remains between the bounds on $\log\mu_d$.

Our proof uses a topological extension of the
Figiel--Lindenstrauss--Milman inequality. The classical inequality
states that a centrally symmetric $d$-dimensional polytope with
$v$ vertices and $f$ facets satisfies
\begin{equation}\label{eq:flm}
 \log v\,\log f\geq c d
\end{equation}
for an absolute constant $c>0$ \cite{FLM}.
Frick, Hosseini, and Vasileuski \cite[Question~37]{FHV} asked
whether an analogous bound holds for finite strongly regular
CW complexes with a free cellular involution, with equivariant
index replacing dimension.

A finite CW complex is \emph{strongly regular} if its characteristic
maps are embeddings and the intersection of two closed cells is
empty or a closed cell. Its \emph{facets} are its inclusion-maximal
cells. We answer the question affirmatively. The proof combines
smooth Morse theory with an elementary trace estimate and also
gives a Morse-theoretic proof of \eqref{eq:flm}. For geometric
proofs and further context, see \cite{FLM,Milo}.

All logarithms are natural. For a space $X$ with a free involution~$\tau$ (that is, $\tau(x)\ne x$ for all~$x$), 
write
\[
 \ind X=\min\{k\geq0:\text{there is an equivariant map }X\longrightarrow S^k\},
\]
where spheres carry the antipodal involution; the value is $+\infty$
if no such map exists, and $\ind\varnothing=-1$. Equivariant here means that the map commutes with free involutions on the domain and codomain.
The index is monotone under equivariant maps, and $\ind S^d=d$ by the
Borsuk--Ulam theorem; see Matou\v sek's book~\cite{Matousek} for an introduction.
The corresponding \emph{coindex} is
\[
 \coind X=\sup\{k\geq0:\text{there is an equivariant map }
 S^k\longrightarrow X\}.
\]
We set $\coind\varnothing=-1$ and allow the value $+\infty$.
The Borsuk--Ulam theorem gives $\coind X\leq\ind X$.

\begin{theorem}[Topological FLM inequality]\label{thm:main}
Let $X$ be a nonempty finite strongly regular CW complex with a free
involution that permutes its cells. If $X$ has $v$ vertices and $f$
facets, then
\begin{equation}\label{eq:main}
 \ind X\leq \Czero\log v\,\log f,
 \qquad \Czero=24e^2.
\end{equation}
In particular, this holds for every finite simplicial complex with a
free simplicial involution.
\end{theorem}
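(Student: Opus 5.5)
We embed $X$ equivariantly into a Euclidean space so that each cell sits inside a coordinate structure indexed by the vertices and facets. Then we use a smooth Morse function whose critical points have small Morse index. This bounds the equivariant topology.

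\emph{Step 1: the equivariant map.} The vertices come in $v/2$ antipodal pairs $\{u,\tau u\}$. Choose an equivariant map $\phi\colon X\to\R^{v/2}$ that is affine on cells, sending $u\mapsto e_u$ and $\tau u\mapsto -e_u$; on higher cells extend by barycentric subdivision, using the strong regularity. Similarly, each facet $F$ gives an odd ``support'' coordinate. By strong regularity, a point of $X$ is determined by its carrier cell together with convex weights on that cell's vertices. Consider
\[
 g(x)=\max_F \langle \phi(x), w_F\rangle,
\]
a polyhedral gauge. The image $\phi(X)$ avoids $0$: on a cell, no antipodal pair is used twice, because the involution is free. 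So, after normalizing, we obtain an equivariant map $X\to S^{v/2-1}$. This only gives $\ind X\le v$, so a dimension reduction is needed.

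\emph{Step 2: random or Morse reduction.} We mimic the FLM proof with the two norms $\|\cdot\|_\infty$-type (from facets) and $\ell_1$-type (from vertices) on $\R^{v/2}$. Replace the max by the smooth soft-max
\[
 h_\beta(y)=\beta^{-1}\log\sum_F e^{\beta\langle y,w_F\rangle},
\]
and its vertex analogue $s_\alpha(y)=\alpha^{-1}\log\sum_u \cosh(\alpha y_u)$. These approximate the true maxima within $\log f/\beta$ and $\log v/\alpha$ respectively. Then apply equivariant Morse theory to the even function $h_\beta$ restricted to the constraint surface $\{s_\alpha=1\}$, which receives $X$ equivariantly. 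The set $\{h_\beta<\text{level}\}$ is free of zero and maps to a sphere. If every critical point of the restricted function has Morse coindex at most $k$, then $X$ (after flowing down) maps equivariantly into a complex built from cells of dimension $\le k$, so $\ind X\le k$.

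\emph{Step 3: the trace estimate (the main obstacle).} At a critical point $y$ of $h_\beta$ on $\{s_\alpha=1\}$ with Lagrange multiplier $\lambda$, the constrained Hessian is
\[
 H=\Hess h_\beta-\lambda\Hess s_\alpha
\]
restricted to the tangent space. We must bound the number of its nonnegative eigenvalues. First, $\Hess h_\beta$ is $\beta$ times a covariance matrix of the $w_F$ under the Gibbs weights, so its trace is at most $\beta$ times a second moment, which is bounded by $\beta\cdot O(1)$. Second, $\lambda\Hess s_\alpha$ is diagonal with entries about $\lambda\alpha$ times a Gibbs weight. The count of nonnegative directions is then controlled by comparing traces: on a subspace $V$ where $H\ge0$, we have $\tr(\Hess h|_V)\ge\lambda\tr(\Hess s|_V)$. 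The left side is at most $\beta$, and the right side is at least about $\lambda\alpha\dim V/v^{\text{eff}}$. Choosing $\alpha\asymp\log v$ and $\beta\asymp\log f$ so that both approximation errors are a constant fraction, and using that $\lambda$ is bounded below via Euler's identity $\langle\nabla h,y\rangle\approx h$, should give
\[
 \dim V\le e^2\cdot O(\alpha\beta)=O(\log v\log f).
\]
The delicate part is making the Gibbs-weight comparison uniform. I expect this is where the constant $24e^2$ arises, from choosing $\alpha=2\log v$, $\beta=2\log f$ and losing factors of $e$ from the softmax ratios. Finally, we check nondegeneracy by a generic perturbation of the $w_F$ that preserves equivariance. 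This gives \eqref{eq:main}.
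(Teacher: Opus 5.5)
There is a genuine gap in Steps~2--3: your constraint uses the wrong norm. The function $s_\alpha(y)=\alpha^{-1}\log\sum_u\cosh(\alpha y_u)$ is a soft-max of the $|y_u|$, so it smooths $\|y\|_\infty$. But $\phi(X)$ lies on the $\ell_1$-sphere. What makes $X$ ``high'' is the inequality $\max_F\langle y,w_F\rangle\le\|y\|_1$, valid for all $y$ because $w_F\in\{-1,0,1\}^{v/2}$, with equality on $\phi(X)$. After radial projection to an $\ell_\infty$-type surface, the vertices of $X$ sit at level about $1$, while the facet maximum there can be as large as the largest facet size.

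The method then fails outright. Let $X$ be $n$ antipodal pairs of points, so $v=f=2n$ and the $w_F$ are the vectors $\pm e_u$. Then $h_\beta=s_\beta+\beta^{-1}\log 2$. Your choice $\alpha=2\log v=2\log f=\beta$ therefore makes $h_\beta$ constant on $\{s_\alpha=1\}$. Every point is critical with zero constrained Hessian, all $n-1$ directions are nonnegative, and every superlevel set containing $X$ is the whole $(n-1)$-sphere, although $\ind X=0$. For any $\beta<\alpha$, the points of $X$ are themselves critical with $n-1$ positive directions.

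So the non-uniformity you call delicate is fatal: the Gibbs weights of an $\ell_\infty$ soft-max are exponentially small off the maximal coordinates, so the constraint is nearly flat in most directions. Your trace claims also fail. First, $\tr\Hess h_\beta\le\beta\max_F\|w_F\|_2^2$ can be of order $\beta n$, not $O(\beta)$. Second, your own heuristic yields $\dim V\lesssim\beta v^{\mathrm{eff}}/(\lambda\alpha)$, not $O(\alpha\beta)$.

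The missing idea, used in the paper, has two parts.
\begin{itemize}
\item Constrain to a smoothed $\ell_q$-sphere $\{\sum_j(x_j^2+(4n)^{-2})^{q/2}=1\}$ with $q-1\asymp1/\log v$. Its curvature $q(q-1)r_j^{q-2}$ is large exactly on small coordinates.
\item Count nonnegative directions by a trace taken after normalizing by this curvature, i.e.\ Lemma~\ref{lem:trace-bound} with $D=\diag(r_j^{q-2})$. The trace then weights coordinate $j$ by $r_j^{2-q}$, and H\"older with $G(z)=1$ gives $\sum_j r_j^{2-q}\le n^{2-2/q}\le e$. This is what handles large facets.
\end{itemize}
The product of logarithms comes from $(p-1)/(q-1)$: a large objective exponent $p\asymp\log f$ against a small constraint prefactor $q-1$. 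Your facet soft-max could serve in place of $\|Ax\|_p^p$; the problem is the constraint.

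Three further points need repair.
\begin{itemize}
\item The multiplier bound $\lambda\ge pf(z)/q$ requires $f(z)$ bounded below. So the Morse step must use only critical points above a level $a$ strictly below the level $b$ containing $X$, with nullity counted, so that a Morse perturbation fits in the gap (Lemma~\ref{lem:morse}). Perturbing the $w_F$ does not give this.
\item In Step~1, freeness alone does not keep antipodal vertices out of a closed cell; the antipodal CW sphere with two cells per dimension is a counterexample. You need strong regularity together with Brouwer: $\sigma\cap\tau\sigma$ would be an invariant closed cell, hence a ball with a fixed point.
\item Points of a nonsimplicial cell are not determined by convex weights on its vertices. The paper instead maps $X$ cell by cell into the simplicial complex with a full simplex on each facet's vertex set.
\end{itemize}
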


If $X=\partial P$ for a centrally symmetric
$d$-polytope, radial projection gives an equivariant homeomorphism
$X\cong S^{d-1}$. Hence $\ind X=d-1$, and \eqref{eq:main} gives
$\log v\,\log f\geq(d-1)/C_0$.
Thus \eqref{eq:flm} holds with $c=(2C_0)^{-1}$.
\paragraph{Applications.}
To apply Theorem~\ref{thm:main} to a triangulation of $\RP^d$
with $N$ vertices, we lift it to an antipodal triangulation of
$S^d$ and enlarge the lift to a free simplicial complex with
$2N$ vertices and at most $2N$ facets. This gives
$d\leq\Czero\log^2(2N)$ and hence the following bound; the
construction appears in Section~\ref{subsection:triangulations}.
\begin{corollary}[Projective-space triangulations]\label{cor:rp}
Every finite simplicial triangulation of $\RP^d$, $d\geq1$, has at least
\begin{equation}\label{eq:rp}
 N\geq \frac12\exp\!\left(\sqrt{d/\Czero}\right)
\end{equation}
vertices.
\end{corollary}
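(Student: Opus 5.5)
Given a triangulation $K$ of $\RP^d$ with $N$ vertices, we first pass to the double cover. Pull back along the covering map $S^d\to\RP^d$ to get a cell structure $\tilde K$ on $S^d$ with a free involution (the deck transformation) permuting cells. In general $\tilde K$ is only a $\Delta$-complex, not a simplicial complex, since a simplex of $K$ need not lift injectively in the naive sense. Still, each simplex of $K$ has exactly two lifts, swapped by the antipodal map, and each lift is an embedded simplex because simplices are contractible. Strong regularity may fail when two lifted simplices meet in a non-face. The approach is therefore to avoid $\tilde K$ and build an abstract simplicial complex directly.

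Define $L$ on the vertex set $V\times\{\pm\}$, with $2N$ vertices, where $\tau$ flips the sign. For each facet $\sigma$ of $K$, choose the two lifts $\tilde\sigma^{\pm}$ in $S^d$. Each lift determines a sign labeling of the vertices of $\sigma$ once we fix, for every vertex $x$ of $K$, a preferred lift $x^+$. Then $\tilde\sigma^{\pm}$ corresponds to an abstract simplex on $V\times\{\pm\}$ containing exactly one copy of each vertex of $\sigma$. Let $L_0$ be the complex generated by these simplices. The lifts of distinct simplices meet exactly in lifts of common faces, so $L_0$ is simplicial and isomorphic to the antipodal triangulation $\tilde K\cong S^d$, with $\ind L_0=d$.

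Counting facets of $L_0$ gives $2f(K)$, which may be much larger than $2N$. This is the step that needs the "enlargement." Add to $L_0$, for each vertex $x^\epsilon$ of $L$, the simplex
\[
F(x^\epsilon)=\{y^{\delta}:\ y^\delta \text{ lies in the closed star of } x^\epsilon \text{ in } L_0\}.
\]
Every facet of $L_0$ is contained in $F(x^\epsilon)$ for any of its vertices $x^\epsilon$. So the complex $L$ generated by the sets $F(x^\epsilon)$ has at most $2N$ facets and contains $L_0$.

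The key check, and the main obstacle, is that $\tau$ remains free on $L$: no $F(x^\epsilon)$ may contain both $y^+$ and $y^-$, and no face may be $\tau$-fixed. If $y^{+}$ and $y^{-}$ both lay in the closed star of $x^\epsilon$, then $x^\epsilon$ would be adjacent to both lifts of $y$. That gives a closed edge path $x\to y\to x$ in $K$ that lifts to an open path, i.e.\ a noncontractible loop of length $2$ in $\RP^d$, which is impossible since edges form a simplicial complex. Edges in the star also need care: two star vertices $y,z$ together with $x$ might create a non-contractible triangle. We only need the vertex-level condition, though, because $F(x^\epsilon)$ is a simplex, and freeness of a simplicial involution on vertices and faces just requires that no face contain an antipodal pair. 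So the length-$2$ loop argument suffices. If this turns out subtle, a fallback is to use the barycentric subdivision. That would cost only a constant factor in $N$ via a different count, but it would spoil the clean constant, so I would try to make the star argument work.

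Now apply Theorem~\ref{thm:main} with $v=2N$ and $f\le 2N$. By monotonicity of the index under the inclusion $L_0\hookrightarrow L$ and Borsuk--Ulam,
\[
d=\ind S^d\le \ind L\le C_0\log^2(2N).
\]
Rearranging gives $2N\ge\exp(\sqrt{d/C_0})$, which is exactly \eqref{eq:rp}.
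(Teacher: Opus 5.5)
Your proposal is correct and is essentially the paper's proof. You lift to the antipodal triangulation of $S^d$ on $2N$ vertices and adjoin the full simplex on each closed vertex neighborhood (your closed-star vertex sets $F(x^\epsilon)$). Freeness holds because two edges at $x^\epsilon$ cannot lift the same base edge $xy$; the loop $y\to x\to y$ backtracks along that single edge, so it is null-homotopic and lifts to a closed path. You then apply Theorem~\ref{thm:main} with $v=2N$ and $f\le 2N$.

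Two side remarks are inaccurate, though you use neither. The lift of a simplicial complex along a covering is always simplicial, not merely a $\Delta$-complex, as your identification of $L_0$ with the lift already assumes. And barycentric subdivision is not a constant-factor cost in the vertex count.
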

Frankl--Pach--P\'alv\"olgyi \cite[Theorem~1]{FPP} already proved an
$\exp(\Omega(\sqrt d))$ lower bound for the sizes of the exchange
set systems underlying the construction of Adiprasito--Avvakumov--Karasev \cite{AAK}.
Corollary~\ref{cor:rp} applies more generally to every simplicial triangulation of~$\RP^d$.

It also answers \cite[Question~3.8]{AAK}, concerning centrally symmetric
polytopes whose facets through $v$ are disjoint from those through $-v$
for every vertex $v$. Such a polytope admits an equivariant boundary
triangulation whose antipodal quotient is simplicial, without adding
vertices \cite[Corollary~2.2]{AAK}. Its number of vertices is therefore
also $\exp(\Omega(\sqrt d))$ in dimension $d$.

More generally, the same vertex bound holds for any finite simplicial
complex $T$ admitting a double cover of Stiefel--Whitney height at
least $d$, equivalently a class $\alpha\in H^1(T;\mathbb F_2)$ with
$\alpha^d\ne0$; see Corollary~\ref{cor:cup}.

For a graph $G$, let $B_0(G)$ be its box complex, with empty shores
allowed; its definition is recalled in Section~\ref{sec:graphs}.
For an integer $t\geq1$, following Simonyi--Tardos
\cite[Definition~3]{ST}, $G$ is \emph{topologically
$t$-chromatic} if there is an equivariant map $S^{t-1}\to |B_0(G)|$.
This implies $\chi(G)\geq t$.

\begin{corollary}[Triangle-free graphs]\label{cor:graph}
If $G$ is a finite simple triangle-free graph on $n\geq1$ vertices, then
\begin{equation}\label{eq:graphindex}
 \ind |B_0(G)|\leq\Czero\log(2n)\log(2n+2).
\end{equation}
In particular, if $G$ is topologically $t$-chromatic, then
\begin{equation}\label{eq:graphsize}
 n\geq \frac12\exp\!\left(\sqrt{(t-1)/\Czero}\right)-1.
\end{equation}
Thus its number of vertices grows superpolynomially in $t$.
\end{corollary}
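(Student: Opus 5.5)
We will deduce Corollary~\ref{cor:graph} from Theorem~\ref{thm:main} by realizing $|B_0(G)|$, up to equivariant homotopy (or at least up to equivariant maps in both directions, which is all the index needs), as a free strongly regular or simplicial complex with few vertices and few facets. Recall that $B_0(G)$ has vertex set $V\times\{1,2\}$, written $A\uplus B$ with $A\subseteq V\times\{1\}$ and $B\subseteq V\times\{2\}$. Its simplices are the pairs $A\uplus B$ such that every vertex of $A$ is adjacent to every vertex of $B$. Empty shores are allowed, and the involution swaps the two shores. This is a simplicial complex with $2n$ vertices and a free simplicial involution, since $A\cap B=\varnothing$ by looplessness, so $A\uplus B\neq B\uplus A$ unless both are empty. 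So the vertex count $v=2n$ is immediate, and the work is in counting facets.

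The key step uses triangle-freeness to show that $B_0(G)$ has at most $2n+2$ facets. Take a facet $A\uplus B$. If $B\ne\varnothing$, then $A\subseteq \bigcap_{b\in B}N(b)$, and maximality forces $A$ to equal this common neighbourhood. Similarly, if $A\neq\varnothing$ then $B=\bigcap_{a\in A} N(a)$. Now suppose both $A$ and $B$ are nonempty and $|B|\geq2$. Pick $a\in A$; then $B\subseteq N(a)$, and $N(a)$ is an independent set since $G$ is triangle-free. Because $N(a)\supseteq B$ and every element of $A$ must be adjacent to all of $B$, I expect that, after a short argument, a facet with both shores nonempty has the form $N(b)\uplus N(a)$-type complete bipartite pieces. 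The plan is to show that every facet is determined by a single vertex. Concretely, either it is $\{x\}\uplus N(x)$ or $N(x)\uplus\{x\}$, or it is one of the two facets $V\uplus\varnothing$ and $\varnothing\uplus V$; the latter arise only when $G$ has no edges, and otherwise the facets with an empty shore are not maximal. This gives at most $2n+2$ facets.

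Here is where triangle-freeness enters: if $A\uplus B$ is a simplex with $|A|,|B|\geq2$, it still may not be a facet-type cone. So the honest route may instead be to pass to an equivariant subcomplex or deformation retract. The natural candidate is the subcomplex $L$ generated by the simplices $\{x\}\uplus N(x)$, $N(x)\uplus\{x\}$ and $V\uplus\varnothing$, $\varnothing\uplus V$. We would then show that the inclusion $L\hookrightarrow B_0(G)$ admits an equivariant map back, for instance by a closure or "common-neighbourhood" retraction on face posets. Alternatively, we could construct directly an equivariant simplicial map from the barycentric subdivision of $B_0(G)$ to a complex with $2n$ vertices and $2n+2$ facets. \emph{This facet-reduction step is the main obstacle}; what we would try first is the poset map $A\uplus B\mapsto$ (a vertex-cone containing it). Triangle-freeness makes $N(x)$ independent, which guarantees that the cones $\{x\}\uplus N(x)$ are simplices at all.

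Once we have a free simplicial complex $X$ with an equivariant map $|B_0(G)|\to X$, $2n$ vertices and at most $2n+2$ facets, monotonicity of the index and Theorem~\ref{thm:main} give \eqref{eq:graphindex}. For \eqref{eq:graphsize}, topological $t$-chromaticity gives $t-1\leq\coind\leq\ind|B_0(G)|\leq C_0\log^2(2n+2)$. Solving for $n$ then yields $2n+2\geq\exp(\sqrt{(t-1)/C_0})$, which is exactly the claimed bound.
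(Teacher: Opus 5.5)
Your last step, solving $t-1\leq\Czero\log^2(2n+2)$ for $n$, is fine. The core step, however, is missing.

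\textbf{The facet classification is false.} Your first claim, that the facets of $B_0(G)$ are the vertex cones plus the pure shores, fails even for bipartite $G$. Take $G=C_4$ with vertices $1,2,3,4$ in cyclic order. Then $\{1,3\}\uplus\{2,4\}$ is a facet with both shores of size $2$, so it is not of the form $\{x\}\uplus N(x)$ or $N(x)\uplus\{x\}$. For the crown graph ($x_i\sim y_j$ iff $i\neq j$), every $\varnothing\neq S\subsetneq[k]$ gives a facet $\{x_i:i\in S\}\uplus\{y_j:j\notin S\}$. So $B_0(G)$ can have exponentially many facets. Also, $V\uplus\varnothing$ is always a facet, since no vertex can be added to the empty shore.

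\textbf{The fallback is not carried out.} You propose an equivariant map from $B_0(G)$ to the subcomplex generated by the vertex cones, but you do not prove one exists. The poset map you suggest (send a simplex to a vertex cone containing it) is not defined: the $C_4$ facet above lies in no vertex cone. Moreover, $\{x\}\uplus N(x)$ is a simplex of $B_0(G)$ for every graph, because only edges between the shores are required. So triangle-freeness is not needed where you invoke it.

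\textbf{The missing idea: enlarge rather than shrink.} Theorem~\ref{thm:main} applies to any free simplicial complex containing $B_0(G)$, since the inclusion is equivariant and the index is monotone. For $b\in V$ set
\[
F_b=N(b)^+\cup(V\setminus N(b))^- .
\]
Each $F_b$ contains exactly one of $w^+,w^-$ for every $w$. Together with their antipodes and $V^\pm$, the $F_b$ generate a free complex $L$ with $2n$ vertices and at most $2n+2$ facets.

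Triangle-freeness enters here. In a mixed simplex $A^+\cup B^-$, an edge inside $B$ together with any vertex of $A$ would form a triangle, so $B$ is independent. Hence for $b\in B$ we get $A\subseteq N(b)$ and $B\subseteq V\setminus N(b)$, that is, $A^+\cup B^-\subseteq F_b$. So $B_0(G)\subseteq L$, and \eqref{eq:graphindex} follows from Theorem~\ref{thm:main}. The sets $F_b$ need not be simplices of $B_0(G)$, and nothing requires them to be.
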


This contrasts with ordinary chromatic number: there are
triangle-free graphs of chromatic number $t\geq2$ on only
$O(t^2\log t)$ vertices, by Kim's Ramsey bound \cite{Kim}.
We recall this consequence in the graph subsection.

 A partial sign matrix
$A\in\{-1,1,*\}^{M\times N}$ determines a simplicial complex~$S(A)$ with free involution, the sign complex of~$A$: each row specifies a simplex of signed column labels, and
we include these simplices together with their antipodes
\cite{FHV}. Every finite simplicial complex with free involution arises this way,
so the simplicial case of Theorem~\ref{thm:main} is equivalent to
the bound
\[
 \ind|S(A)|\leq C_0\log(2M)\log(2N).
\]
The inequality
$\ind|S(A)|+1\leq\operatorname{signrank}(A)$ makes the index a
topological lower bound for sign-rank~\cite{FHV}. Our theorem
bounds this invariant in terms of the VC dimension as well:
for $N\geq2$ and VC dimension $d\geq1$, we obtain
$O(d\log^2 N)$ for total matrices and $O(d\log^3 N)$ for partial
matrices, independently of the number of rows; see
Corollaries~\ref{cor:total} and~\ref{cor:partial}.

\paragraph{The Morse-theoretic mechanism.}
We encode vertices and facets by a bounded matrix $A$. A smooth
approximation to the $\ell_1$-sphere carries the even function
$x\mapsto\|Ax\|_p^p$. At a high critical value, the curvature of the
constraint contributes a negative definite term to its Hessian.
A trace estimate limits the number of nonnegative directions, and
Morse theory turns this into an index bound for a superlevel set.
Choosing $p\asymp\log f$ and a rounding exponent
$q-1\asymp1/\log v$ yields the two logarithms.

\section{The Morse-theoretic index bound}\label{sec:morse}

\subsection{Preliminary lemmas}

For a symmetric quadratic form $H$, let $\nuplus(H)$ be its number of
nonnegative eigenvalues, including nullity, and let $\nu_{>0}(H)$ count
its positive eigenvalues. These numbers do not depend on the inner
product used to represent the form.
Write $H_1\preceq H_2$ if $H_2-H_1$ is positive semidefinite.

The following lemma combines three standard facts. First, a regular
superlevel set of a Morse function has a CW model whose cell dimensions
are the numbers of positive Hessian eigenvalues at the contributing
critical points
\cite[Theorem~3.5 and the subsequent remark]{Milnor}.
Second, for a free involution, a CW model of the quotient lifts to an
equivariant CW model of the covering space, of the same dimension
\cite[Proposition~1.30 and Appendix, Exercise~1]{Hatcher}.
Third, elementary obstruction theory gives an equivariant map from
any CW complex~$X$ with free involution of dimension at most $D$ to~$S^D$, that is, $\ind X\le \dim X$. Indeed,
choose the map equivariantly on the vertices, then extend over one
cell from each orbit and prescribe the extension on its partner by
equivariance. These extensions exist because $S^D$ is
$(D-1)$-connected; see \cite[Lemma~4.7]{Hatcher}.
A small Morse perturbation allows degenerate critical points as well,
provided the Hessian bound includes nullity.

\begin{lemma}[Superlevel index from Hessian bounds]\label{lem:morse}
Let $\Sigma$ be a compact smooth manifold without boundary, with a
smooth free involution, and let $f\colon\Sigma\to\R$ be smooth and invariant.
Suppose $a<b$ and $D\geq0$ is an integer such that
\[
 \nuplus(\Hess_\Sigma f(x))\leq D
 \quad\text{whenever }d_\Sigma f(x)=0\text{ and }f(x)\geq a.
\]
Then
\[
 \ind\{f\geq b\}\leq D.
\]
\end{lemma}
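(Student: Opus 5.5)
We work on the quotient, apply Morse theory to the negated function there, and lift back. Let $\bar\Sigma=\Sigma/\tau$; it is a compact smooth manifold because the involution is free and smooth. Since $f$ is invariant, it descends to a smooth $\bar f$ on $\bar\Sigma$. Under the covering map, the critical points and Hessians of $f$ and $\bar f$ correspond. A superlevel set $\{f\geq b\}$ is a sublevel set of $-f$. Positive eigenvalues of $\Hess f$ correspond to negative eigenvalues of $\Hess(-f)$, that is, to the Morse index of $-f$.

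\textbf{Step 1: reduce to a Morse function with controlled indices.} Choose $b'\in(a,b)$. We perturb $-\bar f$ by a small $C^2$ amount, supported near the critical set in $\{\bar f\geq a\}$, to get a Morse function $g$ on $\bar\Sigma$. We also arrange that $-b$ is a regular value of $g$. The following claim is the step that needs care. If the $C^2$ perturbation is small enough, every critical point $y$ of $g$ with $-g(y)\geq b'$ lies near a critical point $x$ of $\bar f$ with $\bar f(x)\geq a$. The Hessian of $g$ at $y$ is close to $\Hess(-\bar f)(x)$. By continuity of eigenvalues, the number of negative eigenvalues of $\Hess g(y)$ is then at most the number of nonpositive eigenvalues of $\Hess(-\bar f)(x)$. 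That number equals $\nuplus(\Hess\bar f(x))\leq D$. This is exactly why nullity is included in the hypothesis. Compactness gives a uniform neighbourhood of the critical set within which the Hessian eigenvalue count is upper semicontinuous in this sense. Away from that neighbourhood, $d\bar f$ is bounded below, so small perturbations create no new critical points there.

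\textbf{Step 2: build a CW model.} By Milnor's Theorem~3.5, the sublevel set $\{g\leq -b\}$ is homotopy equivalent to a CW complex. That complex has one cell of dimension $\lambda$ for each critical point of index $\lambda$ below level $-b$. Those critical points satisfy $-g\geq b>b'$, so by Step~1 every cell has dimension at most $D$. Choosing the perturbation small relative to $b-b'$ ensures that $\{g\leq -b\}$ and $\{\bar f\geq b\}$ are related by an ambient deformation. We obtain this by flowing along a gradient-like field in the slab between two nearby regular levels. The same holds if we replace $b$ by a nearby regular value: $\{\bar f\geq b\}$ deformation-retracts onto a set $\{\bar f\geq b+\epsilon\}$ whenever no critical values lie in between. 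If critical values do accumulate at $b$, we instead use a slightly smaller regular level in $(b',b)$. Its superlevel set contains $\{f\geq b\}$, and monotonicity of the index under the inclusion suffices.

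\textbf{Step 3: lift and conclude.} The homotopy equivalence is a map over $\bar\Sigma$, so we pull back the double cover. By Hatcher's Proposition~1.30, this gives a free $\Z/2$-CW complex $Y$ of dimension at most $D$ together with an equivariant map from the lift of the superlevel set to $Y$. Concretely, the homotopy inverse lifts along covers. Obstruction theory then gives an equivariant map $Y\to S^D$. Composing with the inclusion $\{f\geq b\}\hookrightarrow$ (the lifted superlevel set) yields an equivariant map $\{f\geq b\}\to S^D$, so $\ind\{f\geq b\}\leq D$. If $\{f\geq b\}$ is empty, the bound holds trivially. We expect the main obstacle to be the perturbation bookkeeping in Step~1: keeping nullity under control while making sure the level set used is regular and that its superlevel set still receives an equivariant map from $\{f\geq b\}$. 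The plan is to handle it through the inclusion and monotonicity, rather than insisting on a homotopy equivalence at level exactly $b$.
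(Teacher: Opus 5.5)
Your proposal is correct and follows essentially the same route as the paper: you descend to $\Sigma/\mathbb Z_2$, perturb to a Morse function whose critical points above an intermediate level have at most $D$ positive Hessian directions (including nullity is what makes this stable), take Milnor's CW model of a regular superlevel set containing $\{\bar f\geq b\}$, pull back the double cover, and finish with obstruction theory and monotonicity. The detour in Step~2 through an ``ambient deformation'' at level exactly $b$ is unnecessary and not justified in general (for instance, a local maximum of $\bar f$ at value exactly $b$ can disappear from $\{g\leq -b\}$), but your inclusion-and-monotonicity fallback is the paper's own device, which handles every case at once by fixing $c=(a+b)/2$ and requiring $\|\bar h-\bar f\|_\infty<\min\{c-a,b-c\}$, so that $\{f\geq b\}\subseteq\pi^{-1}\{\bar h\geq c\}$.
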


\begin{proof}
Let $\pi\colon\Sigma\to Q=\Sigma/\mathbb Z_2$ be the quotient covering,
and let $\bar f\colon Q\to\R$ be the descended function.
Since $\pi$ is a local diffeomorphism, the hypothesis says that
$\bar f$ has at least $\dim Q-D$ negative Hessian eigenvalues
at every critical point with value at least $a$.
Put $c=(a+b)/2$.
By Morse approximation and stability of this lower bound under
$C^2$ perturbations
\cite[Corollary~6.8 and Lemma~22.4]{Milnor},
we can choose a Morse function $\bar h$ sufficiently close to $\bar f$
that
\[
 \|\bar h-\bar f\|_\infty<\min\{c-a,b-c\},
\]
and every critical point of $\bar h$ with value at least $c$ has
at most $D$ positive Hessian eigenvalues.
An arbitrarily small constant shift makes $c$ a regular value
while preserving these properties.

Set $B=\{\bar h\geq c\}$ and $W=\pi^{-1}(B)$.
The uniform approximation gives $\{f\geq b\}\subseteq W$.
Applying Morse theory to $-\bar h$, every critical point contributing
to the sublevel set $B=\{-\bar h\leq-c\}$ has Morse index at most $D$.
Thus $B$ has the homotopy type of a finite CW complex $K$ with
$\dim K\leq D$.
Pulling the double cover $W\to B$ back along a homotopy equivalence
$K\to B$ gives a free $\mathbb Z_2$-CW complex~$\widetilde K$,
of the same dimension, equivariantly homotopy equivalent to $W$.
The standard dimension bound for index therefore yields
\[
 \ind\{f\geq b\}
 \leq \ind W
 = \ind\widetilde K
 \leq D.
\]
\end{proof}

Including nullity makes the estimate stable under perturbation.
The gap between $a$ and $b$ permits the perturbation without asserting
that the original, possibly degenerate superlevel is a Morse sublevel.

The following lemma follows by combining Weyl's monotonicity theorem
\cite[Corollary~4.3.3, p.~182]{HornJohnson} with Sylvester's law of inertia
\cite[Theorem~4.5.8, p.~222]{HornJohnson}.
The trace bound then uses the elementary observation that the number of
eigenvalues of a positive semidefinite matrix that are at least~$1$ is
at most its trace. We include a proof for completeness.

\begin{lemma}[Trace bound for nonnegative directions]
\label{lem:trace-bound}
Let $P,D\in\mathbb R^{n\times n}$ be symmetric matrices, with $P$
positive semidefinite and $D$ positive definite, and let $\beta>0$.
Let $H$ be a symmetric bilinear form on a subspace
$E\subseteq\mathbb R^n$, satisfying
\[
 H(v,v)\leq v^{\mathsf T}Pv-\beta v^{\mathsf T}Dv
 \qquad\text{for every }v\in E.
\]
Then
\begin{equation}\label{eq:tracelemma}
 \nu_{\geq0}(H)
 \leq \beta^{-1}\operatorname{tr}(D^{-1/2}PD^{-1/2}).
\end{equation}
\end{lemma}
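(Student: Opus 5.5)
The plan is to reduce to the case $D=I$ by a change of variables, and then to compare $H$ with $P-\beta I$ on a subspace where $H$ is nonnegative. Let $k=\nu_{\geq0}(H)$. By Sylvester's law of inertia (equivalently, the Courant--Fischer characterization, which is independent of the inner product used to represent $H$), there is a subspace $F\subseteq E$ with $\dim F=k$ such that $H(v,v)\geq0$ for all $v\in F$. Concretely, $F$ is the span of the eigenvectors of any matrix representing $H$ on $E$ that have nonnegative eigenvalues. On $F$ the hypothesis gives
\[
 v^{\mathsf T}Pv\geq\beta\,v^{\mathsf T}Dv\qquad\text{for all }v\in F.
\]

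Next we substitute $w=D^{1/2}v$, which is legitimate because $D$ is positive definite, so $D^{1/2}$ is invertible. The subspace $G=D^{1/2}F$ still has dimension $k$. Writing $\tilde P=D^{-1/2}PD^{-1/2}$, which is positive semidefinite, the inequality becomes $w^{\mathsf T}\tilde Pw\geq\beta\|w\|^2$ for all $w\in G$.

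The Courant--Fischer min-max theorem then gives
\[
 \lambda_k(\tilde P)\geq\min_{w\in G,\ \|w\|=1}w^{\mathsf T}\tilde Pw\geq\beta,
\]
where the eigenvalues are ordered decreasingly. This is the Weyl-type monotonicity step cited in the paper. So $\tilde P$ has at least $k$ eigenvalues that are at least $\beta$.

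Finally we use positive semidefiniteness: all eigenvalues of $\tilde P$ are nonnegative. Therefore
\[
 \operatorname{tr}\tilde P=\sum_i\lambda_i(\tilde P)\geq\sum_{i\leq k}\lambda_i(\tilde P)\geq k\beta.
\]
Dividing by $\beta$ gives \eqref{eq:tracelemma}.

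The only step requiring care is the first. We must extract a $k$-dimensional subspace on which $H$ is nonnegative, correctly counting the null directions, for a form defined only on the subspace $E$. This is handled by diagonalizing $H$ in an orthonormal basis of $E$ and taking the span of the eigenvectors with eigenvalue at least zero. Everything after that is routine linear algebra.
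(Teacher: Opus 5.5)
Your proof is correct and follows the paper's argument: you isolate a $k$-dimensional subspace on which $H\geq0$, normalize $D$ to the identity via $D^{1/2}$, and bound $\operatorname{tr}(D^{-1/2}PD^{-1/2})$ from below by $k\beta$. The only difference is cosmetic: instead of invoking Courant--Fischer, the paper computes the trace in an orthonormal basis of $\mathbb R^n$ extending one of $G=D^{1/2}F$. In that basis the first $k$ diagonal entries are at least $\beta$ and the remaining ones are nonnegative.
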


\begin{proof}
We first identify a subspace on which the positive term $P$ dominates
$\beta D$, then change coordinates so that $D$ becomes the identity.

\smallskip
\noindent\emph{Step 1: isolate the nonnegative directions of $H$.}
Set $k=\nu_{\geq0}(H)$. By the spectral theorem, the eigenvectors
corresponding to the nonnegative eigenvalues of $H$ span a
$k$-dimensional subspace $V\subseteq E$ on which $H$ is nonnegative.
Thus, for every $v\in V$,
\[
 0\leq H(v,v)\leq v^{\mathsf T}Pv-\beta v^{\mathsf T}Dv,
 \qquad\text{so}\qquad
 v^{\mathsf T}Pv\geq\beta v^{\mathsf T}Dv.
\]

\smallskip
\noindent\emph{Step 2: normalize the positive definite form $D$.}
Since $D$ is positive definite, its symmetric positive definite
square root $D^{1/2}$ is invertible. Put
\[
 M=D^{-1/2}PD^{-1/2},\qquad U=D^{1/2}V.
\]
The change of coordinates is invertible, so $\dim U=k$.
For $u=D^{1/2}v\in U$, the inequality from Step~1 becomes
\[
 u^{\mathsf T}Mu
 =v^{\mathsf T}Pv
 \geq\beta v^{\mathsf T}Dv
 =\beta\|u\|_2^2.
\]
Thus every unit vector in $U$ has quadratic value at least $\beta$
under $M$, which is positive semidefinite on all of $\mathbb R^n$.

\smallskip
\noindent\emph{Step 3: sum the contributions to the trace.}
Choose an orthonormal basis $u_1,\ldots,u_k$ of $U$ and extend it
to an orthonormal basis $u_1,\ldots,u_n$ of $\mathbb R^n$.
In this basis, the first $k$ diagonal entries of $M$ are at least
$\beta$ and the remaining entries are nonnegative. Consequently,
\[
 \operatorname{tr}(M)
 =\sum_{i=1}^n u_i^{\mathsf T}Mu_i
 \geq\sum_{i=1}^k u_i^{\mathsf T}Mu_i
 \geq k\beta.
\]
Dividing by $\beta$ gives the claimed bound.
\end{proof}

\subsection{The matrix bound}

The matrix formulation below records the vertex--facet incidences of a
free simplicial complex. Choose one vertex from each antipodal pair and
realize these vertices and their partners as $e_j$ and $-e_j$ on the
$\ell_1$-sphere. A row of the signed incidence matrix $A$ records which
of these vertices belong to a chosen facet: its entries are $1$, $-1$,
or $0$. On that facet the corresponding coordinate of $Ax$ is $1$,
and on its antipodal facet it is $-1$. Since $\|Ax\|_\infty\leq
\|x\|_1=1$, the realization lies in the equality set $R(A)$ defined
below. Thus the numbers of vertex and facet orbits become the numbers
of columns and rows. Section~\ref{sec:complexes} gives the full reduction,
including the extension to strongly regular CW complexes.

Gromov's nonlinear spectra provide a methodological precedent: they
measure energy sublevel sets by topological invariants rather than
counting linear eigenvalues \cite[\S\S0.3--0.4]{Gromov}.
For the norm ratio $\|x\|_p/\|x\|_q$ on a finite set with uniform
measure, he identifies the critical directions, whose nonzero
coordinates have equal absolute values, and computes their Morse
indices \cite[\S1.3.A, p.~150]{Gromov}. Up to smoothing, our energy is
$E_A(x)=\|Ax\|_p/\|x\|_q$; the case $A=I$ recovers this model,
up to the constant arising from the normalization of the measure.
Operator norm ratios also appear in his broader framework
\cite[\S3.1, pp.~153--154]{Gromov}. For general bounded matrices,
we replace the classification of critical points by a uniform trace
bound on the number of nonnegative Hessian eigenvalues, including
nullity. Gromov's essential dimension measures deformation into
low-dimensional subsets of an ambient projective space; our argument
instead constructs a low-dimensional CW model of a containing
superlevel set after perturbation. Lifting this model to its double
cover bounds the ordinary $\mathbb Z_2$-index.

\begin{theorem}[Matrix bound]\label{thm:matrix}
For $m,n\geq1$ and $A\in[-1,1]^{m\times n}$, set
\[
 R(A)=\{x\in\R^n:\|x\|_1=\|Ax\|_\infty=1\}.
\]
With the antipodal involution,
\begin{equation}\label{eq:matrix}
 \ind R(A)\leq
 \min\{m-1,n-1,\Czero\log(2m)\log(2n)\}.
\end{equation}
\end{theorem}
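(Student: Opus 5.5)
The bounds $m-1$ and $n-1$ are elementary. Since $\|Ax\|_\infty=1$ on $R(A)$, the map $x\mapsto Ax/\|Ax\|_2$ is an equivariant map $R(A)\to S^{m-1}$. Since $R(A)$ lies in the $\ell_1$-sphere, which is equivariantly homeomorphic to $S^{n-1}$, we also get $\ind R(A)\le n-1$.

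For the logarithmic bound we apply Lemma~\ref{lem:morse} to a smoothed $\ell_q$-sphere with $q$ slightly above $1$. Fix
\[
 p\asymp\log(2m)\ (p\ge 2),\qquad q-1\asymp 1/\log(2n),
\]
with the constants tuned so that $m^{2/p}\le e^2$ and $n^{2(q-1)/q}\le e^2$; this tuning is what produces $\Czero=24e^2$. Let
\[
 \Sigma=\Bigl\{y:\ g_\varepsilon(y)=\sum_i (y_i^2+\varepsilon^2)^{q/2}=1\Bigr\}
\]
for small $\varepsilon>0$, and let $F(y)=\sum_j ((Ay)_j^2+\varepsilon^2)^{p/2}$, or simply $\|Ay\|_p^p$, which is $C^2$ for $p\ge2$. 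Both are even, so the antipodal map is a free involution on $\Sigma$ and $F$ is invariant.

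\textbf{Step 1: reduce to a superlevel set.} Radial projection $x\mapsto x/g_\varepsilon$-norm is equivariant from $R(A)$ into $\Sigma$. For $x\in R(A)$ we have $\|x\|_q\le\|x\|_1=1$, so the image $y$ satisfies $\|Ay\|_\infty\gtrsim1$ and hence $F(y)\ge b$ with $b$ close to $1$, up to $\varepsilon$-errors. By monotonicity of the index, $\ind R(A)\le\ind\{F\ge b\}$. Take $a=b/2$, say.

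\textbf{Step 2: Hessian at critical points.} At a critical point $y$ we have $\nabla F=\lambda\nabla g$. Euler homogeneity, exact when $\varepsilon=0$ and approximate otherwise, gives $\lambda\approx pF/q$. The constrained Hessian is $\Hess F-\lambda\Hess g$ restricted to $T_y\Sigma$. We bound the two terms separately:
\[
 \Hess F\preceq P:=p(p-1)A^{\mathsf T}\diag\bigl(|Ay|^{p-2}\bigr)A,
 \qquad
 \Hess g\succeq q(q-1)\,D,\quad D=\diag\bigl((y_i^2+\varepsilon^2)^{(q-2)/2}\bigr).
\]
The matrix $D$ is positive definite thanks to the smoothing. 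Hence $H\preceq P-\beta D$ with $\beta\approx p(q-1)F$.

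\textbf{Step 3: apply the trace bound.} Lemma~\ref{lem:trace-bound} gives $\nuplus(H)\le\beta^{-1}\tr(D^{-1/2}PD^{-1/2})$. Using $|a_{ji}|\le1$,
\[
 \tr(D^{-1/2}PD^{-1/2})\le p(p-1)\Bigl(\sum_j|(Ay)_j|^{p-2}\Bigr)\Bigl(\sum_i|y_i|^{2-q}\Bigr),
\]
up to $\varepsilon$-corrections. Hölder's inequality gives $\sum_j|(Ay)_j|^{p-2}\le m^{2/p}F^{(p-2)/p}$ and $\sum_i|y_i|^{2-q}\le n^{2(q-1)/q}$, the latter using $\|y\|_q=1$. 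Dividing by $\beta$ and using $F\ge a$ (so $F^{-2/p}$ is bounded), we get
\[
 \nuplus(H)\lesssim e^4\,\frac{p}{q-1}\asymp\log(2m)\log(2n).
\]
Lemma~\ref{lem:morse} then gives $\ind\{F\ge b\}\le D$ with $D$ of this size.

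\textbf{Main obstacle.} The hardest part is the $\varepsilon$-smoothing. The exact $\ell_q$-sphere is not $C^2$ where coordinates vanish, so one must check three things uniformly as $\varepsilon\to0$: that the Euler relation for $\lambda$, the lower bound on $\Hess g_\varepsilon$, and the estimate $\sum_i (y_i^2+\varepsilon^2)^{(2-q)/2}\lesssim n^{2(q-1)/q}$ all survive with only $o(1)$ losses. One must also check that Step 1 still lands in $\{F\ge b\}$ with the gap $a<b$ intact. I would first try to compute directly with $(y_i^2+\varepsilon^2)^{q/2}$, since its second derivative is explicitly bounded below by $q(q-1)(y_i^2+\varepsilon^2)^{(q-2)/2}$, and choose $\varepsilon$ after $p$ and $q$.
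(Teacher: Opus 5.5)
Your proposal is correct and is essentially the paper's proof. Both arguments run through the same chain: a radial map into a smoothed $\ell_q$-sphere, the multiplier bound from homogeneity, the curvature bound $\nabla^2 g\succeq q(q-1)D$, Lemma~\ref{lem:trace-bound}, H\"older, and Lemma~\ref{lem:morse}. The $\varepsilon$-uniformity you flag as the main obstacle never arises if you take $F=\|Ay\|_p^p$ with $p$ even and fix a small $\varepsilon$ (the paper uses $\varepsilon=1/(4n)$): with $r_j=(y_j^2+\varepsilon^2)^{1/2}$, both $\lambda\geq pF/q$ (from $\langle y,\nabla g\rangle=q\sum_j y_j^2r_j^{q-2}\leq q\sum_j r_j^q=q$) and $\sum_j r_j^{2-q}\leq n^{2-2/q}$ hold exactly, and only the radial step needs $n\varepsilon^q<1/4$; your constant bookkeeping is loose, whereas the paper's $24e^2=6\cdot e\cdot e\cdot 4$ comes from $p=2\lceil\log(2m)\rceil$, $q=1+1/(2\log(2n))$ and the lower level $2^{-p}$.
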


The proof replaces the maximum over the $m$ rows by an $\ell_p$-norm
and the $\ell_1$-sphere in $\R^n$ by a smoothed $\ell_q$-sphere.
Large $p$ approximates the
maximum, whereas $q$ close to $1$ approximates the original constraint.
Both choices have a curvature cost. The Hessian estimate contains the
ratio $(p-1)/(q-1)$, comparing the positive contribution of the objective
with the negative contribution of the constraint, and the squared
norm-comparison losses $m^{2/p}n^{2-2/q}$; see
\eqref{eq:central}. Increasing $p$ or decreasing $q$ improves the norm
comparison but increases the curvature ratio. Taking
$p\asymp\log(2m)$ and $q-1\asymp1/\log(2n)$ keeps both dimension
factors bounded, leaving a curvature ratio of order~$\log(2m)\log(2n)$.

\begin{proof}
The
odd maps $x\mapsto Ax/\|Ax\|_2$ and $x\mapsto x/\|x\|_2$ give
the bounds $m-1$ and $n-1$, respectively, and settle $\min(m,n)=1$.
Assume henceforth that $m,n\geq2$.
We map $R(A)$ into a smooth superlevel set, bound the Hessian
at every sufficiently high critical point, and apply
Lemma~\ref{lem:morse}.

\smallskip
\noindent\emph{Step 1: choose a smooth constraint.}
Let $p\geq4$ be an even integer and let $1<q<2$.
We leave these exponents unspecified until the final estimate.
Define
\[
 G(x)=\sum_{j=1}^n\bigl(x_j^2+(4n)^{-2}\bigr)^{q/2},\qquad
 \Sigma=\{G=1\},\qquad f(x)=\|Ax\|_p^p.
\]
The fixed positive term $(4n)^{-2}$ smooths the zero coordinates while
keeping their total contribution $G(0)=n(4n)^{-q}$ below $1/4$.
This leaves a fixed margin for the radial comparison in Step~2;
no limiting argument in the smoothing parameter is needed.
The function $G$ is proper, and
\[
 \frac{\partial G}{\partial x_j}(x)
 =qx_j\bigl(x_j^2+(4n)^{-2}\bigr)^{q/2-1}.
\]
Thus $\nabla G$ vanishes only at $0$, which is not on $\Sigma$.
The level set $\Sigma$ is therefore a compact smooth hypersurface
with a free antipodal involution. Taking $p$ even makes
$f(x)=\sum_i(Ax)_i^p$ a smooth even polynomial, including where
some $(Ax)_i=0$. The convenient restriction $p\geq4$ also avoids
an endpoint case in the first use of H\"older's inequality in Step~5.

\smallskip
\noindent\emph{Step 2: send $R(A)$ into a superlevel.}
For $u\ne0$, the function $s\mapsto G(su)$ is strictly increasing
for $s>0$, starts below $1/4$, and tends to infinity.
There is a unique $s(u)>0$ with $G(s(u)u)=1$.
The implicit function theorem gives continuity of~$s$, and evenness of~$G$
gives $s(-u)=s(u)$.

If $\|u\|_1=1$, subadditivity of $t^{q/2}$ and
$|u_j|^q\leq|u_j|$ give
\[
 1=G(s(u)u)
 \leq s(u)^q\sum_j|u_j|^q+n(4n)^{-q}
 \leq s(u)^q+\tfrac14,
\]
so $s(u)\geq(3/4)^{1/q}>3/4$.
For $u\in R(A)$, we have $\|Au\|_\infty=1$, and hence
\begin{equation}\label{eq:levels}
 f(s(u)u)=s(u)^p\|Au\|_p^p
 \geq s(u)^p>(3/4)^p>2^{-p}.
\end{equation}
The odd radial map $u\mapsto s(u)u$ therefore sends $R(A)$ into
$\{x\in\Sigma:f(x)\geq(3/4)^p\}$.
By Lemma~\ref{lem:morse}, it suffices to bound the nonnegative
Hessian count at critical points $z$ with $f(z)\geq2^{-p}$.
The levels $2^{-p}$ and $(3/4)^p$ serve two purposes: their gap
leaves room for the Morse perturbation, and the lower level makes
the factor $f(z)^{-2/p}$ in the final estimate at most $4$.

\smallskip
\noindent\emph{Step 3: bounding the Hessian of~$f$.}
Restricting $f$ to the curved hypersurface $\Sigma$ introduces a
negative term into its Hessian. We compute this term and bound its
size in five parts.

Fix a critical point $z\in\Sigma$ of $f|_\Sigma$ with
$f(z)\geq2^{-p}$. This point remains fixed throughout Steps~3--5.
Set
\[
 r_j=\bigl(z_j^2+(4n)^{-2}\bigr)^{1/2},\qquad
 D=\diag(r_j^{q-2}),\qquad
 W=\diag(|(Az)_i|^{p-2}).
\]
Here $D$ is positive definite and $W$ is positive semidefinite.
All three quantities are evaluated at the fixed critical point~$z$.

\smallskip
\noindent\emph{(a) Compute the ambient Hessian of $f$.}
For any $v\in\R^n$, evenness of $p$ gives
\[
 f(z+tv)=\sum_i\bigl((Az)_i+t(Av)_i\bigr)^p.
\]
Differentiating twice at $t=0$, we obtain
\[
 \nabla^2f(z)[v,v]
 =p(p-1)\sum_i|(Az)_i|^{p-2}(Av)_i^2.
\]
Thus the ambient Hessian is positive semidefinite and, in matrix form,
\[
 \nabla^2f(z)=p(p-1)A^TWA.
\]

\smallskip
\noindent\emph{(b) Bound the curvature of the constraint.}
The mixed second derivatives of $G$ vanish, and differentiation gives
\begin{align*}
 \frac{\partial G}{\partial x_j}(z)
 &=qz_jr_j^{q-2},\\
 \frac{\partial^2G}{\partial x_j^2}(z)
 &=qr_j^{q-2}+q(q-2)z_j^2r_j^{q-4}.
\end{align*}
The second term is nonpositive because $q<2$.
Substituting $z_j^2=r_j^2-(4n)^{-2}$ reveals a positive lower bound:
\begin{align*}
 \frac{\partial^2G}{\partial x_j^2}(z)
 &=q(q-1)r_j^{q-2}
   +\frac{q(2-q)}{(4n)^2}r_j^{q-4}\\
 &\geq q(q-1)r_j^{q-2}.
\end{align*}
Consequently, for every $v\in\R^n$,
\[
 \nabla^2G(z)[v,v]
 \geq q(q-1)\sum_jr_j^{q-2}v_j^2.
\]
Equivalently,
\[
 \nabla^2G(z)\succeq q(q-1)D.
\]

\smallskip
\noindent\emph{(c) Estimate the Lagrange multiplier.}
The tangent space is
\[
 T_z\Sigma=\{v:\langle\nabla G(z),v\rangle=0\}.
\]
Criticality of $f|_\Sigma$ says that $\nabla f(z)$ is also
orthogonal to this tangent space. Since $\nabla G(z)\ne0$,
there is a scalar $\lambda$ such that
\[
 \nabla f(z)=\lambda\nabla G(z).
\]
To determine its sign and size, take the inner product with $z$.
Differentiating $f(tz)=t^pf(z)$ at $t=1$ gives
\[
 \langle z,\nabla f(z)\rangle=pf(z).
\]
On the other hand,
\[
 0<\langle z,\nabla G(z)\rangle
 =q\sum_jz_j^2r_j^{q-2}
 \leq q\sum_jr_j^q=q.
\]
Here we used $z\ne0$, $z_j^2\leq r_j^2$, and $G(z)=1$.
Therefore
\[
 \lambda=\frac{pf(z)}{\langle z,\nabla G(z)\rangle}
 \geq\frac{pf(z)}q>0.
\]

\smallskip
\noindent\emph{(d) Compute the Hessian of $f$ on $\Sigma$.}
For $v\in T_z\Sigma$, choose a smooth curve $\gamma$ in $\Sigma$
with $\gamma(0)=z$ and $\gamma'(0)=v$.
The chain rule gives
\[
 (f\circ\gamma)''(0)
 =\nabla^2f(z)[v,v]
  +\langle\nabla f(z),\gamma''(0)\rangle.
\]
Since $G(\gamma(t))=1$, differentiating twice gives
\[
 0=\nabla^2G(z)[v,v]
   +\langle\nabla G(z),\gamma''(0)\rangle.
\]
Substituting $\nabla f(z)=\lambda\nabla G(z)$ into the first
identity gives
\begin{align*}
 (f\circ\gamma)''(0)
 &=\nabla^2f(z)[v,v]
   +\lambda\langle\nabla G(z),\gamma''(0)\rangle\\
 &=\nabla^2f(z)[v,v]-\lambda\nabla^2G(z)[v,v].
\end{align*}
At a critical point, this second derivative is the constrained
Hessian $H(v,v)$, where $H=\Hess_\Sigma f(z)$; it depends only
on the tangent vector $v$, not on the chosen curve. Hence
\[
 H=\bigl(\nabla^2f(z)-\lambda\nabla^2G(z)\bigr)|_{T_z\Sigma}.
\]

\smallskip
\noindent\emph{(e) Combine the estimates.}
For every tangent vector $v$, parts (b) and (c) give
\[
 \lambda\nabla^2G(z)[v,v]
 \geq\frac{pf(z)}q\,q(q-1)\sum_jr_j^{q-2}v_j^2
 =p(q-1)f(z)\sum_jr_j^{q-2}v_j^2.
\]
Subtract this lower bound from the ambient Hessian in part (a),
and divide by $p$. We obtain
\[
 \frac1pH(v,v)
 \leq(p-1)\sum_i|(Az)_i|^{p-2}(Av)_i^2
      -(q-1)f(z)\sum_jr_j^{q-2}v_j^2.
\]
In matrix notation, this is
\begin{equation}\label{eq:hessian}
 \frac1p H\preceq
 \bigl((p-1)A^TWA-(q-1)f(z)D\bigr)|_{T_z\Sigma}.
\end{equation}

\smallskip
\noindent\emph{Step 4: bound the number of nonnegative directions by trace.}
Apply Lemma~\ref{lem:trace-bound} to \eqref{eq:hessian}.
Multiplication by $1/p>0$ does not change the Hessian count, so
\begin{align}
 \nuplus(H)
 &\leq\frac{p-1}{(q-1)f(z)}
       \tr(D^{-1/2}A^TWAD^{-1/2})\notag\\
 &=\frac{p-1}{(q-1)f(z)}
       \sum_{i,j}|(Az)_i|^{p-2}A_{ij}^2r_j^{2-q}\notag\\
 &\leq\frac{p-1}{(q-1)f(z)}
       \left(\sum_i|(Az)_i|^{p-2}\right)
       \left(\sum_jr_j^{2-q}\right).
 \label{eq:traceproduct}
\end{align}
The equality computes diagonal entries, using
$(D^{-1})_{jj}=r_j^{2-q}$; the last inequality uses $A_{ij}^2\leq1$.

\smallskip
\noindent\emph{Step 5: bound the two sums and apply Lemma~\ref{lem:morse}.}
H\"older's inequality, applied to each sequence and the constant
sequence $1$, gives
\begin{align*}
 \sum_i|(Az)_i|^{p-2}
 &\leq m^{2/p}\left(\sum_i|(Az)_i|^p\right)^{1-2/p}
 =m^{2/p}f(z)^{1-2/p},\\
 \sum_jr_j^{2-q}
 &\leq n^{2-2/q}\left(\sum_jr_j^q\right)^{(2-q)/q}
 =n^{2-2/q}.
\end{align*}
The last equality uses $G(z)=1$.
Substituting into \eqref{eq:traceproduct} gives
\begin{equation}\label{eq:central}
 \nuplus(H)\leq
 \frac{p-1}{q-1}\,m^{2/p}n^{2-2/q}f(z)^{-2/p}.
\end{equation}
We now choose
\[
 p=2\lceil\log(2m)\rceil,\qquad
 q=1+\frac1{2\log(2n)}.
\]
These satisfy the required conditions $p\geq4$ even and $1<q<2$.
Their logarithmic scales keep the two dimension factors bounded:
\[
 m^{2/p}
 \leq\exp\!\left(\frac{\log m}{\log(2m)}\right)\leq e,
 \qquad
 n^{2-2/q}
 =\exp\!\left(\frac{\log n}{\log(2n)+1/2}\right)\leq e.
\]
Also $p-1\leq2\log(2m)+1\leq3\log(2m)$, since
$\log(2m)\geq\log4>1$. Thus
\[
 \frac{p-1}{q-1}
 =2(p-1)\log(2n)\leq6\log(2m)\log(2n).
\]
This quotient produces the two logarithms.
Finally, $f(z)\geq2^{-p}$ gives $f(z)^{-2/p}\leq4$, so
\eqref{eq:central} yields
\[
 \nuplus(H)\leq24e^2\log(2m)\log(2n)
 =\Czero\log(2m)\log(2n).
\]
The critical point $z$ was arbitrary subject to $f(z)\geq2^{-p}$,
so this bound holds at every critical point required by
Lemma~\ref{lem:morse}.
Since the Hessian count is an integer, that lemma applies with
the integer part of this bound and the levels $2^{-p}<(3/4)^p$.
Composing with the odd map from Step~2 gives
\[
 \ind R(A)
 \leq\ind\{x\in\Sigma:f(x)\geq(3/4)^p\}
 \leq\Czero\log(2m)\log(2n),
\]
as claimed.
\end{proof}

\begin{remark}[Robust matrix bound]\label{rem:robust-matrix}
More generally, for $0<\rho\leq1$ and $A\in[-1,1]^{m\times n}$, let
\[
 R_\rho(A)=\{x\in\R^n:\|x\|_1=1,\ \|Ax\|_\infty\geq\rho\}.
\]
Then
\[
 \ind R_\rho(A)\leq
 \min\{m-1,n-1,\Czero\rho^{-2}\log(2m)\log(2n)\}.
\]
The applications below use only $\rho=1$.
\end{remark}

\begin{remark}[VC Refinement for Total Matrices]\label{rem:refinement}
For a total sign matrix $A\in\{-1,+1\}^{m\times n}$ with row
VC dimension $d\geq1$, we can refine the bound to
\[
 \operatorname{ind} R(A)
 \leq C_d\bigl(\log(2m)\log(2n)\bigr)^{d/(d+1)},
\]
where $C_d$ depends only on $d$.
The idea is to replace the Hessian trace estimate by an eigenvalue
counting estimate. It is possible to put the normalized positive matrix in the Hessian
calculation into the form
\[
 M=\sum_i\pi_i b_i b_i^{\mathsf T},
 \qquad b_i=(A_{ij}\sqrt{\mu_j})_{j=1}^n,
\]
where $\pi$ and $\mu$ are some probability distributions on the rows and
columns, respectively. Haussler's packing theorem~\cite{Haussler1995}
provides $O_d(\eta^{-d})$ representative rows such that every row
disagrees with a representative on columns of total $\mu$-mass
at most $\eta$. The corresponding weighted vectors therefore
satisfy $\lVert b_i-b_{i'}\rVert_2^2\leq4\eta$.

Let $V$ be the linear span of these representative weighted vectors.
Then $\dim V=O_d(\eta^{-d})$, and every $b_i$ has squared distance
at most $4\eta$ from $V$. Thus, writing $P_V$ for orthogonal
projection onto $V$, we have
$\operatorname{tr}((I-P_V)M)\leq4\eta$.
It follows that $M$ has at most $\dim V+4\eta/t$ eigenvalues
at least $t$. Choosing $\eta=t^{1/(d+1)}$ gives
$O_d(t^{-d/(d+1)})$. Substituting this estimate into the
constrained Hessian calculation and applying the same
Morse-theoretic argument proves the refinement.
\end{remark}

\section{Free complexes and the FLM inequality}\label{sec:complexes}

\begin{proof}[Proof of Theorem~\ref{thm:main}]
First let $K$ be a finite simplicial complex with free involution with $v=2n$ vertices
and $f=2m$ facets. No simplex contains an antipodal vertex pair, since
the midpoint of such a pair would be fixed. Choose representatives
$v_1,\ldots,v_n$ of the vertex orbits and
$\sigma_1,\ldots,\sigma_m$ of the facet orbits. Set
\[
 A_{ij}=\begin{cases}
 1,&v_j\in\sigma_i,\\
 -1,&\tau v_j\in\sigma_i,\\
 0,&\text{otherwise}.
 \end{cases}
\]
Realize $v_j$ as $e_j$ and $\tau v_j$ as $-e_j$ on the
$\ell_1$-sphere. On $\sigma_i$, the $i$th coordinate of $Ax$ equals
$1$, and on $\tau\sigma_i$ it equals $-1$. Hence
$|K|\subseteq R(A)$, and Theorem~\ref{thm:matrix} gives
\[
 \ind|K|\leq\Czero\log(2n)\log(2m)=\Czero\log v\log f.
\]

Now let $X$ be strongly regular. Every closed cell $\sigma$ misses
$\tau\sigma$: a nonempty intersection would be an invariant closed
cell, hence a ball on which the involution has a fixed point by
Brouwer's theorem. In particular, no closed cell contains antipodal
vertices.

Form a simplicial complex $K$ by placing a full simplex on the vertex
set of each facet of $X$. It has $v$ vertices, at most $f$ facets, and
a free involution. There is an equivariant map $X\to|K|$ carrying each
closed cell into the simplex on its vertices. To construct it, start
with the vertex map and extend over one cell in each orbit. Its
boundary has already mapped into that simplex, which is contractible.
Define the extension on the paired cell by equivariance.
Monotonicity of the index and the simplicial case complete the proof.
\end{proof}

\begin{remark}[Scope and sharpness]
\emph{Comparison with Cohen--Macaulay complexes.}
For a centrally symmetric simplicial $(d-1)$-sphere, the stronger
facet bound $f\geq2^d$ is classical. Stanley~\cite[Theorem~3.1]{Stanley} proves it more generally
for centrally symmetric simplicial complexes that are Cohen--Macaulay
over~$\R$. Theorem~\ref{thm:main} applies without this
homological hypothesis and includes nonsimplicial cell structures.
The lifted sphere in Corollary~\ref{cor:rp} may have exponentially many
facets, so this classical facet bound does not yield a superpolynomial
vertex bound. The enlargement used in its proof has few facets but need not be Cohen--Macaulay.

\smallskip
\noindent\emph{Sharpness of the logarithmic product.}
The product-of-logarithms order is sharp:
centrally symmetric Hanner polytopes can have
$\log v,\log f=\Theta(\sqrt d)$; see~\cite{FLM} and~\cite[Proposition~4.1]{Milo}.
The preceding simplicial enlargement retains an equivariant map from
$S^{d-1}$ and does not increase either count, so it gives the same
sharpness for arbitrary free simplicial complexes.

\smallskip
\noindent\emph{Necessity of strong regularity.}
The antipodal CW structure on $S^d$
with two cells in each dimension is regular and has two vertices and
two facets, but its index is $d$; for $d\geq1$ its closed cells do not
satisfy the intersection condition.
\end{remark}

\section{The size consequences}

\subsection{Projective-space triangulations}\label{subsection:triangulations}

\begin{proof}[Proof of Corollary~\ref{cor:rp}]
Lift a triangulation $T$ of $\RP^d$ with $N$ vertices along
$S^d\to\RP^d$. The resulting simplicial complex $K$ has $2N$ vertices,
and its deck involution makes $|K|$ equivariantly homeomorphic to $S^d$.

For $w\in V(K)$, let $N[w]$ be its closed neighborhood in the
$1$-skeleton. No $N[w]$ contains both $v$ and $\tau v$.
There is no edge from $v$ to $\tau v$, since an edge maps injectively
to a base edge. If $w\notin\{v,\tau v\}$ and both $v,\tau v\in N[w]$,
the two edges $wv$ and $w\tau v$ would lift the same base edge with
the same initial vertex, contradicting uniqueness of path lifting.

Let $2^U$ denote the full simplex on a vertex set $U$, and set
\[
 L=\bigcup_{w\in V(K)}2^{N[w]}.
\]
The involution preserves $L$ and is free, since no generating simplex
contains an antipodal pair. Moreover, $K\subseteq L$: the vertices
of a simplex lie in the closed neighborhood of any of its vertices.
The complex $L$ has $2N$ vertices and at most $2N$ facets. Hence
\[
 d=\ind|K|\leq\ind|L|\leq\Czero\log^2(2N). \qedhere
\]
\end{proof}

Cup products already give vertex lower bounds without a manifold
hypothesis. In particular, the estimates of Govc, Marzantowicz,
and Pave\v{s}i\'c \cite[Theorems~1.2 and~3.5]{GMP} imply that
$\alpha^h\ne0$, for a degree-one cohomology class on an
$N$-vertex simplicial complex, forces
$N\geq(h+1)(h+2)/2$. For powers of a single degree-one
mod-two class, the following corollary gives instead the
superpolynomial bound $N\geq\frac12\exp(\sqrt{h/\Czero})$.

\begin{corollary}[Degree-one cup powers]\label{cor:cup}
Let $T$ be a finite simplicial complex with $N$ vertices.
If $\alpha\in H^1(T;\mathbb F_2)$ satisfies $\alpha^h\ne0$ for an
integer $h\geq1$, then
\begin{equation}\label{eq:cup}
 h\leq\Czero\log^2(2N).
\end{equation}
In particular, \eqref{eq:rp} holds for every finite simplicial complex
homotopy equivalent to $\RP^d$, $d\geq1$.
\end{corollary}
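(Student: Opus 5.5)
Represent $\alpha$ by a double cover and run the argument of Corollary~\ref{cor:rp} on it; the only new input is an index lower bound coming from $\alpha^h\ne0$.

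\emph{Step 1: the double cover.} Since $\alpha\ne0$ (as $\alpha^h\ne0$), the class $\alpha\in H^1(T;\mathbb F_2)=\operatorname{Hom}(\pi_1,\Z/2)$ determines a connected double cover $p\colon K\to T$. Lifting the triangulation makes $K$ a simplicial complex with $2N$ vertices and a free simplicial deck involution $\tau$, and $\alpha$ is the Stiefel--Whitney class $w_1$ of this cover. (If $T$ is disconnected, restrict to a component on which $\alpha^h\ne0$; this only lowers $N$.)

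\emph{Step 2: index lower bound.} Let $g\colon |K|\to S^k$ be an equivariant map with $k=\ind|K|$; this is finite by the dimension bound. Then $g$ descends to a map $\bar g\colon |T|\to\RP^k$ classifying the cover, so $\alpha=\bar g^*(a)$, where $a$ generates $H^1(\RP^k;\mathbb F_2)$. Since $a^{k+1}=0$, we get $\alpha^{k+1}=0$. Together with $\alpha^h\ne0$ this forces $h\leq k=\ind|K|$. This is the standard Stiefel--Whitney height bound, and I take it as routine.

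\emph{Step 3: few facets.} This is the same enlargement as in Corollary~\ref{cor:rp}. For $w\in V(K)$, $N[w]$ never contains both $v$ and $\tau v$:
\begin{itemize}
\item an edge $v\,\tau v$ would project to a loop edge, which is impossible;
\item two edges $wv$ and $w\tau v$ would lift the same base edge from the same initial vertex, which is impossible by unique path lifting.
\end{itemize}
So $L=\bigcup_w 2^{N[w]}$ is a free simplicial complex containing $K$, with $2N$ vertices and at most $2N$ facets. Monotonicity of the index and Theorem~\ref{thm:main} then give
\[
 h\leq\ind|K|\leq\ind|L|\leq\Czero\log^2(2N).
\]

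\emph{The ``in particular''.} If $T\simeq\RP^d$, the generator $\alpha$ satisfies $\alpha^d\ne0$, since cohomology is a homotopy invariant. Solving $d\leq\Czero\log^2(2N)$ gives \eqref{eq:rp}.

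The main obstacle is Step~2: checking carefully that the equivariant map descends to a classifying map pulling $a$ back to $\alpha$. This follows from naturality of $w_1$ for double covers, since the pullback of the antipodal cover $S^k\to\RP^k$ along $\bar g$ is exactly $K\to T$.
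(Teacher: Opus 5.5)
Your proposal is correct and follows the paper's proof. You pass to the double cover classified by $\alpha$, obtain $\ind|K|\geq h$ by descending an equivariant map $|K|\to S^k$ to $|T|\to\RP^k$ and using $H^{k+1}(\RP^k;\mathbb F_2)=0$, and then bound $\ind|K|$ from above via the closed-neighborhood enlargement of Corollary~\ref{cor:rp} together with Theorem~\ref{thm:main}; your remarks on disconnected $T$ and on identifying the cover as a pullback of the antipodal cover only spell out details the paper leaves implicit.
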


\begin{proof}
Let $K\to T$ be the double cover classified by $\alpha$.
An equivariant map $|K|\to S^k$ induces $|T|\to\RP^k$ pulling
the class of the double cover $S^k\to\RP^k$ back to $\alpha$.
Its $(k+1)$st power vanishes, since $H^{k+1}(\RP^k;\mathbb F_2)=0$.
Thus $\alpha^h\ne0$ forces $k\geq h$, and $\ind|K|\geq h$.
The neighborhood enlargement from the proof of Corollary~\ref{cor:rp}
supplies the upper bound. For the last assertion, use the degree-one
class whose $d$th power is nonzero in $H^*(\RP^d;\mathbb F_2)$.
\end{proof}

In the language of covering type, Corollary~\ref{cor:cup} gives
\[
 \operatorname{ct}(\RP^d)
 \geq \frac12\exp\!\left(\sqrt{d/\Czero}\right).
\]
Here $\operatorname{ct}(X)$, introduced by Karoubi and Weibel
\cite{KW}, is the minimum size of a good open cover of a space
homotopy equivalent to $X$; a cover is good if every nonempty
finite intersection of its members is contractible.
For spaces of finite simplicial homotopy type, this equals the
minimum number of vertices in a finite simplicial complex
homotopy equivalent to $X$ \cite[Theorem~1.2]{GMP}.

\subsection{Triangle-free graphs}\label{sec:graphs}

For a finite simple graph $G$ on $V$, write $v^+=(v,+)$ and
$v^-=(v,-)$. The complex $B_0(G)$ consists of all simplices
\[
 A^+\cup B^-,\qquad A,B\subseteq V,\quad A\cap B=\varnothing,
 \quad ab\in E(G)\text{ for every }a\in A,\ b\in B.
\]
Either shore may be empty. Exchanging signs is a free involution.
The standard box-complex bound is
\[
 \chi(G)\geq\ind|B_0(G)|+1
 \geq\coind|B_0(G)|+1,
\]
as recalled in \cite{ST}. Thus the definition in the introduction
is equivalent to $\coind|B_0(G)|\geq t-1$.

\begin{proof}[Proof of Corollary~\ref{cor:graph}]
For each $v\in V$, let $N(v)$ be its open neighborhood and put
\[
 F_v=N(v)^+\cup(V\setminus N(v))^-.
\]
Let $L$ be generated by the simplices $F_v$, their antipodes, and
the two pure shores $V^+$ and $V^-$. Each generating simplex contains
exactly one of $w^+,w^-$ for every $w\in V$, so the involution is free.
The complex has $2n$ vertices and at most $2n+2$ facets.

We claim that $B_0(G)\subseteq L$. Pure shores are already included.
For a mixed simplex $A^+\cup B^-$, choose $b\in B$.
Then $A\subseteq N(b)$. Any edge within $B$, together with any
$a\in A$, would form a triangle. Hence $B$ is independent and
$B\subseteq V\setminus N(b)$.
Thus $A^+\cup B^-\subseteq F_b$, proving the claim.

Theorem~\ref{thm:main} now yields \eqref{eq:graphindex}.
If $G$ is topologically $t$-chromatic, monotonicity gives
\[
 t-1\leq\ind|B_0(G)|
 \leq\Czero\log(2n)\log(2n+2)
 \leq\Czero\log^2(2n+2).
\]
Rearranging proves \eqref{eq:graphsize}.
\end{proof}

\begin{remark}[Chromatic number and topological bounds]
For every integer $c\geq2$, there are triangle-free graphs with ordinary
chromatic number $c$ on only $O(c^2\log c)$ vertices.
Indeed, Kim's Ramsey bound \cite{Kim} gives $n$-vertex triangle-free
graphs with independence number $\alpha(G)=O(\sqrt{n\log n})$.
Using $\chi(G)\geq n/\alpha(G)$, choose $n=O(c^2\log c)$ so that
$\chi(G)\geq c$. Deleting one vertex decreases the chromatic number
by at most one, so successive deletions give $\chi(G)=c$.
For these graphs, Corollary~\ref{cor:graph} gives
$\ind|B_0(G)|=O(\log^2 c)$.
\end{remark}

\subsection{Sign complexes}

Let $A \in \{-1,1,*\}^{M \times N}$ be a partial sign matrix with
columns indexed by $[N]$. Following \cite[Section~3.1]{FHV}, its
\emph{sign complex} $S(A)$ is the simplicial complex generated by
the following simplices on the signed column set
$\{1^{+},1^{-},\dots,N^{+},N^{-}\}$:
\[
  \sigma_r^{+}
  = \{\, i^{+} : r_i = +1 \,\}
    \cup \{\, i^{-} : r_i = -1 \,\},
  \qquad
  \sigma_r^{-} = -\,\sigma_r^{+},
\]
as $r$ ranges over the rows of $A$. Exchanging $i^{+}$ and $i^{-}$
defines a free involution. The complex has at most $2N$ vertices
and at most $2M$ facets; a column consisting entirely of $*$ entries
contributes no vertices. If the realization is nonempty,
Theorem~\ref{thm:main} gives the following bound. 

\begin{corollary}\label{cor:sign}
For every partial sign matrix $A \in \{-1,1,*\}^{M\times N}$,
\[
  \operatorname{ind} |S(A)|
  \;\le\; C_0 \log(2N)\log(2M).
\]
In particular $\operatorname{ind}|S(A)| \le C_0 \log^2(2N)$
for a square matrix.
\end{corollary}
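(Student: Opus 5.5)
The plan is to reduce Corollary~\ref{cor:sign} directly to the simplicial case of Theorem~\ref{thm:main}, or equivalently to Theorem~\ref{thm:matrix}. That matrix route avoids counting the vertices and facets of $S(A)$ exactly.

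\emph{Degenerate case.} If $|S(A)|$ is empty, then $\ind|S(A)|=-1$ and the bound is trivial, since the right-hand side is nonnegative for $M,N\geq1$.

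\emph{Main case.} Otherwise, define $B\in\{-1,0,1\}^{M\times N}$ by replacing each $*$ with $0$. Realize $i^+$ as $e_i$ and $i^-$ as $-e_i$ on the $\ell_1$-sphere in $\R^N$. A point $x$ in the simplex $\sigma_r^+$ is a convex combination of the vectors $r_ie_i$ over the columns $i$ with $r_i\neq *$. Hence:
\begin{itemize}
\item $\|x\|_1=1$;
\item $(Bx)_r=\sum_i r_i^2x_ir_i=1$, using $r_i^2=1$ on these columns, so this coordinate has the correct sign;
\item every other coordinate satisfies $|(Bx)_{r'}|\leq\|x\|_1=1$, since the entries of $B$ lie in $[-1,1]$.
\end{itemize}
Thus $x\in R(B)$. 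The same computation on $\sigma_r^-$ gives $(Bx)_r=-1$. Both simplices are faces of the cross-polytope, so the union is the geometric realization of $S(A)$.

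\emph{Conclusion.} This realization is an odd embedding $|S(A)|\hookrightarrow R(B)$. Monotonicity of the index together with Theorem~\ref{thm:matrix} gives
\[
\ind|S(A)|\leq\ind R(B)\leq\Czero\log(2M)\log(2N).
\]
Setting $M=N$ yields the square-matrix statement.

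\emph{Alternative route.} One could instead apply Theorem~\ref{thm:main} with $v\leq2N$ and $f\leq2M$. This requires checking that the realization is nonempty and that the index bound is monotone in $v$ and $f$, which is immediate since the bound only increases with them.

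\emph{Main obstacle.} The only point needing care is that the embedding is equivariant and well defined on shared faces. It is, because it is linear on the cross-polytope's faces, and the rows never put $i^+$ and $i^-$ in one simplex. Everything else is bookkeeping.
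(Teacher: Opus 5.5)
Your proof is correct and is essentially the paper's argument. The paper applies Theorem~\ref{thm:main} with $v\le 2N$ and $f\le 2M$, treating the empty case as trivial; this is your ``alternative route.'' Your main route simply inlines the embedding $|K|\subseteq R(A)$ from the paper's proof of the simplicial case of Theorem~\ref{thm:main}, using all $M$ rows with $*\mapsto 0$ so that no facet counting is needed. One cosmetic fix: writing $x=\sum_i t_i r_i e_i$, the computation should read $(Bx)_r=\sum_i r_i x_i=\sum_i r_i^2 t_i=1$.
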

Every finite simplicial complex with a free involution arises as the sign complex of a partial matrix \cite[Lemma~15]{FHV}, so the simplicial case of
Theorem~\ref{thm:main} and the preceding statement are two readings
of the same result. Given a partial sign matrix $A$, let $\operatorname{VC}(A)$ denote the VC dimension of $A$: the largest $d$ such that some $d$ columns of $A$ carry every pattern in $\{-1, 1\}^d$ among the rows of $A$ restricted to these $d$ columns. If $A \in \{-1, 1\}^{M \times N}$, we call $A$ a total sign matrix.

\begin{corollary}\label{cor:total}
Let $A$ be a total sign matrix with $N \ge 2$ columns and
$\operatorname{VC}(A) = d \ge 1$.
Then
\[
  \operatorname{ind} |S(A)|
  \;\le\; C_0 \, d \log(2eN/d) \log(2N)
  \;=\; O(d \log^2 N),
\]
irrespective of the number of rows.
\end{corollary}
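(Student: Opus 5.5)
Since $A$ is total, its distinct rows form a set system on $[N]$ with VC dimension $d$, and deleting duplicate rows leaves $S(A)$ unchanged. The Sauer--Shelah lemma therefore lets us assume
\[
 M\le\sum_{k=0}^d\binom Nk\le\Bigl(\frac{eN}{d}\Bigr)^d ,
\]
which holds for $1\le d\le N$. A total sign matrix with $N\ge2$ columns has $\operatorname{VC}(A)\le N$ automatically, so no extra case arises. The plan is to feed this row bound into Corollary~\ref{cor:sign}.

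Corollary~\ref{cor:sign} gives $\ind|S(A)|\le C_0\log(2N)\log(2M)$, with $2M\le 2(eN/d)^d$. We then need the elementary estimate
\[
 \log\bigl(2(eN/d)^d\bigr)=\log 2+d\log(eN/d)\le d\log(2eN/d).
\]
This holds because $d\log(2eN/d)=d\log2+d\log(eN/d)$ and $d\ge1$. The nonemptiness caveat before Corollary~\ref{cor:sign} is harmless: a total matrix with at least one row has every column present in each row, so $S(A)$ is nonempty. If there are no rows, the index is $-1$ and the bound is trivial. Combining these gives $\ind|S(A)|\le C_0\,d\log(2eN/d)\log(2N)$.

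For the asymptotic form, $\log(2eN/d)\le\log(2eN)=O(\log N)$ when $N\ge2$, and $\log(2N)=O(\log N)$ as well. Hence the bound is $O(d\log^2N)$, with no dependence on $M$ beyond the Sauer--Shelah reduction.

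There is no real obstacle here. The only points needing care are that removing duplicate rows preserves the sign complex, since it is generated by the row simplices, and the inequality $\log2\le d\log2$ used to absorb the factor $2$ in $2M$.
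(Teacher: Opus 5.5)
Your proposal is correct and matches the paper's proof: remove duplicate rows, bound $M\le(eN/d)^d$ by Sauer--Shelah--Perles, use $d\ge1$ to get $2M\le(2eN/d)^d$, and apply Corollary~\ref{cor:sign}. Your remarks on nonemptiness and the asymptotic form fill in points the paper leaves implicit; the no-row case cannot actually occur, since $\operatorname{VC}(A)\ge1$ forces at least two rows.
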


\begin{proof}
Rows with equal sign patterns give equal simplices, so we may
assume the rows are distinct. By the Sauer--Shelah--Perles lemma
\cite{Sauer},
$M \le \sum_{i \le d}\binom{N}{i} \le (eN/d)^d$ and so
$2M \le (2eN/d)^{d}$ since $d \ge 1$.
Now apply Corollary~\ref{cor:sign}.
\end{proof}

For comparison, Alon, Moran, and Yehudayoff \cite{AMY} construct
total $N\times N$ matrices of VC dimension $2$ whose sign-rank
is of order $\sqrt N$ up to logarithmic factors, whereas
Corollary~\ref{cor:total} bounds their index by $O(\log^2 N)$.
Corollary~\ref{cor:total} gives a bound toward Question~9 of \cite{FHV},
which asks whether $\operatorname{coind}|S(A)|$ is bounded by a
function of $\operatorname{VC}(A)$ alone for total matrices.

Chornomaz, Moran, and Waknine~\cite{chornomaz2025spherical} previously posed an equivalent
question for spherical dimension.
Let $H$ be the row class of $A$, put $H^{\pm}=H\cup(-H)$,
and let $\Delta(H)$ be the union of its row simplices before
adjoining antipodes. Spherical dimension~$\operatorname{sd}(H)$ is the coindex of
$\Delta(H)\cap(-\Delta(H))$, whereas $|S(A)|$ is
$\Delta(H)\cup(-\Delta(H))$. Hence
\[
 \operatorname{coind}|S(A)|=\operatorname{sd}(H^{\pm}),
 \qquad
 \operatorname{VC}(H^{\pm})\leq2\operatorname{VC}(H)+1,
\]
where the VC bound follows from the Sauer--Shelah--Perles lemma.
Conversely, $\Delta(H)\cap(-\Delta(H))\subseteq|S(A)|$.
Thus a bound by a function of VC dimension for either invariant
would give one for the other.

Since $\operatorname{coind}\leq\operatorname{ind}$,
Corollary~\ref{cor:total} gives
$\operatorname{coind}|S(A)|=O(d\log^2 N)$.
Removing the dependence on $N$ remains open.

Blondal, Hatami, Hatami, Lalov, and Tretiak ask the stronger
question of whether the index itself is bounded by a function of
VC dimension \cite[Problem~1.18]{BHHHT}.
They also conjecture a product-of-logarithms bound for the
list-replicability number $\operatorname{LR}(A)$
\cite[Conjecture~1.17]{BHHHT}. Their inequality
$\operatorname{ind}|S(A)|\leq2\operatorname{LR}(A)-1$
\cite[Theorem~1.1]{BHHHT} shows that this conjecture would imply
the total-matrix case of Corollary~\ref{cor:sign}.

\paragraph{Partial matrices.}
For partial matrices, VC dimension does not control the number of
distinct rows: the $2^N$ rows in $\{+1,*\}^{N}$ have VC dimension
$0$.
A total sign matrix $\bar A$ \emph{disambiguates} $A$ if every row
$r$ of $A$ is extended by a row $\bar r$ of $\bar A$, meaning that
$\bar r_i=r_i$ whenever $r_i\ne *$.
The quasipolynomial Sauer--Shelah--Perles lemma controls the number
of such extensions and costs one additional logarithm in the
index bound.

\begin{corollary}\label{cor:partial}
There is an absolute constant $C>0$ such that every partial sign
matrix $A$ with $N\geq2$ columns and $\operatorname{VC}(A)=d$ satisfies
\[
 \operatorname{ind}|S(A)|\leq C d\log^3 N.
\]
\end{corollary}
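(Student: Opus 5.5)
The plan is to reduce to the total case, Corollary~\ref{cor:total}, by replacing $A$ with a disambiguating total matrix $\bar A$ that has few rows and small VC dimension. The starting point is monotonicity: if $\bar A$ disambiguates $A$, then every row simplex $\sigma_r^{\pm}$ is a face of $\sigma_{\bar r}^{\pm}$. Hence $S(A)\subseteq S(\bar A)$ equivariantly, and
\[
\ind|S(A)|\leq\ind|S(\bar A)|\leq C_0\log(2N)\log(2\bar M),
\]
where $\bar M$ is the number of distinct rows of $\bar A$ and the last step is Corollary~\ref{cor:sign}. So it suffices to find a disambiguation with $\log\bar M=O(d\log^2 N)$. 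Equivalently, $\bar M\leq N^{O(d\log N)}$.

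For this I will invoke the quasipolynomial Sauer--Shelah--Perles lemma for partial concept classes, due to Alon--Hanneke--Holzman--Moran, with the Balogh--Bollob\'as/Göös refinement as needed. It states that a partial class on $N$ points with VC dimension $d$ can be disambiguated by a total class of size $N^{O(d\log N)}$. Its proof goes through a communication-complexity or clique-versus-independent-set type argument. This is the step I expect to be the main obstacle. The difficulty is not any computation. It is citing the lemma in exactly the right form, with total size $N^{O(d\log N)}$ and no dependence on the number $M$ of rows.

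Care is needed because disambiguation can raise the VC dimension. That is harmless here: the argument only uses the row count $\bar M$, not $\operatorname{VC}(\bar A)$, since Corollary~\ref{cor:sign} needs only $M$ and $N$.

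Two edge cases remain. If $d=0$, every column has at most one non-$*$ value among the rows. Then a single total row extending all rows disambiguates $A$, so $S(A)$ lies in $\sigma^+\cup\sigma^-$ for one full row $\sigma$, which has index $0$. The bound then holds trivially, with either convention for $d\log^3N$ in this case (reading it as $0$ is fine, since the index is $0$). If $S(A)$ is empty, the index is $-1$. For $d\geq1$ and $N\geq2$, the estimate $\log(2\bar M)\leq C' d\log^2 N$ together with $\log(2N)\leq 3\log N$ gives
\[
\ind|S(A)|\leq C_0\cdot3\log N\cdot C' d\log^2N=Cd\log^3N,
\]
which is the claim.
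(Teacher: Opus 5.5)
Your proposal is correct and matches the paper's proof: you pass to a disambiguation via $S(A)\subseteq S(\bar A)$, take $\bar A$ with at most $N^{cd\log N}$ rows from the quasipolynomial Sauer--Shelah--Perles lemma of Alon--Hanneke--Holzman--Moran, apply Corollary~\ref{cor:sign} to get $C_0(\log 2+cd\log^2N)\log(2N)\le Cd\log^3N$, and handle $d=0$ with a single total row extending every row. The G\"o\"os-type separations you mention bear only on the matching lower bound for disambiguation size, so the AHHM upper bound is all you need.
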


\begin{proof}
If $d=0$, all specified entries in any fixed column agree.
Consequently, a single total row extends every row of $A$.
The complex $S(A)$ therefore lies in two disjoint opposite simplices,
and~$\ind|S(A)|\leq0$.

Extending a row enlarges its simplex, so
$S(A)\subseteq S(\bar A)$ for every disambiguation $\bar A$.
The quasipolynomial Sauer--Shelah--Perles lemma
\cite[Theorem~12]{AHHM} gives a disambiguation with at most
$N^{cd\log N}$ rows, for an absolute constant $c>0$.
By Corollary~\ref{cor:sign} and monotonicity,
\[
 \operatorname{ind}|S(A)|
 \leq\operatorname{ind}|S(\bar A)|
 \leq C_0\bigl(\log 2+cd\log^2 N\bigr)\log(2N)
 \leq C d\log^3 N,
\]
after increasing the absolute constant $C$.
\end{proof}

\begin{remark}
The quasipolynomial bound on the number of rows in a disambiguation
is nearly optimal. There are partial sign matrices of VC dimension
$1$ for which every disambiguation has at least
$N^{(\log N)^{1-o(1)}}$ rows \cite[Theorem~11]{AHHM}.
This concerns the size of disambiguations and does not establish
sharpness of the index bound in Corollary~\ref{cor:partial}.
\end{remark}

\begin{remark}
Already for partial sign matrices of VC dimension $1$, there are
examples~\cite[Proposition~8]{FHV} with
\[
 \operatorname{ind}|S(A)|\geq\operatorname{coind}|S(A)|
 =\Omega\!\left(\frac{\log^2 N}{\log\log N}\right).
\]
Thus the largest possible index for
VC dimension $1$ is bounded below by
$\Omega(\log^2 N/\log\log N)$ and above by $O(\log^3 N)$.
In particular, dependence on $N$ cannot be eliminated for partial
matrices.
\end{remark}

\paragraph{AI use statement.}
The authors developed the topological Figiel--Lindenstrauss--Milman
approach to lower bounds for the vertex numbers of triangulations
of real projective spaces. LLMs assisted with proof discovery (in particular the use of Lemmas~\ref{lem:morse} and~\ref{lem:trace-bound}), proof details, and
manuscript drafts. The authors subsequently substantially
simplified the proofs and revised the manuscript.

\paragraph{Acknowledgements.} 
FF thanks Dan Guyer, Amzi Jeffs, Eran Nevo, and many other colleagues
for helpful conversations about triangulations of real projective spaces.
FF was supported NSF CAREER Grant DMS 2042428.
EM was supported by the National Science Foundation Graduate Research Fellowship Program under Grant Numbers DGE2140739 and DGE2631988. Any opinions, findings, and conclusions or recommendations expressed in this material are those of the authors and do not necessarily reflect the views of the National Science Foundation.


\begin{thebibliography}{12}

\bibitem{AAK}
K.~Adiprasito, S.~Avvakumov, and R.~Karasev,
\emph{A subexponential size triangulation of $\mathbb RP^n$},
Combinatorica \textbf{42} (2022), 1--8.
\href{https://doi.org/10.1007/s00493-021-4602-x}{doi:10.1007/s00493-021-4602-x}.

\bibitem{AHHM}
N.~Alon, S.~Hanneke, R.~Holzman, and S.~Moran,
\emph{A theory of PAC learnability of partial concept classes},
62nd IEEE Annual Symposium on Foundations of Computer Science
(FOCS 2021), IEEE, 2022, 658--671.
\href{https://doi.org/10.1109/FOCS52979.2021.00070}{doi:10.1109/FOCS52979.2021.00070}.
Full version:
\href{https://arxiv.org/abs/2107.08444}{arXiv:2107.08444}.

\bibitem{AMY}
N.~Alon, S.~Moran, and A.~Yehudayoff,
\emph{Sign rank versus VC dimension},
Proceedings of the 29th Annual Conference on Learning Theory,
Proc. Mach. Learn. Res. \textbf{49} (2016), 47--80.
\href{https://proceedings.mlr.press/v49/alon16.html}{PMLR 49}.

\bibitem{AM}
P.~Arnoux and A.~Marin,
\emph{The K\"uhnel triangulation of the complex projective plane from the
view point of complex crystallography. II},
Mem. Fac. Sci. Kyushu Univ. Ser. A \textbf{45} (1991), 167--244.

\bibitem{BHHHT}
A.~Blondal, H.~Hatami, P.~Hatami, C.~Lalov, and S.~Tretiak,
\emph{Sign-rank, index, and list replicability: Connections and separations},
preprint (2026),
\href{https://arxiv.org/abs/2606.18236}{arXiv:2606.18236}.

\bibitem{chornomaz2025spherical}
B.~Chornomaz, S.~Moran, and T.~Waknine,
\emph{Spherical dimension},
Proceedings of the Thirty Eighth Conference on Learning Theory,
Proc. Mach. Learn. Res. \textbf{291} (2025), 1259--1313.
\href{https://proceedings.mlr.press/v291/chornomaz25a.html}{PMLR 291}.

\bibitem{FLM}
T.~Figiel, J.~Lindenstrauss, and V.~D.~Milman,
\emph{The dimension of almost spherical sections of convex bodies},
Acta Math. \textbf{139} (1977), 53--94.
\href{https://doi.org/10.1007/BF02392234}{doi:10.1007/BF02392234}.

\bibitem{FPP}
P.~Frankl, J.~Pach, and D.~P\'alv\"olgyi,
\emph{Exchange properties of finite set-systems},
SIAM J. Discrete Math. \textbf{36} (2022), 2073--2081.
\href{https://doi.org/10.1137/21M145149X}{doi:10.1137/21M145149X}.

\bibitem{FHV}
F.~Frick, K.~Hosseini, and A.~Vasileuski,
\emph{A $\mathbb Z_2$-topological framework for sign-rank lower bounds},
preprint (2026),
\href{https://arxiv.org/abs/2604.01510}{arXiv:2604.01510}.

\bibitem{GMP}
D.~Govc, W.~Marzantowicz, and P.~Pave\v{s}i\'c,
\emph{Estimates of covering type and the number of vertices
of minimal triangulations},
Discrete Comput. Geom. \textbf{63} (2020), 31--48.
\href{https://doi.org/10.1007/s00454-019-00092-z}{doi:10.1007/s00454-019-00092-z}.

\bibitem{Gromov}
M.~Gromov,
Dimension, non-linear spectra and width,
in \emph{Geometric Aspects of Functional Analysis (1986--87)},
J.~Lindenstrauss and V.~D.~Milman (eds.),
Lecture Notes in Mathematics, vol.~1317,
Springer-Verlag, Berlin, 1988, pp.~132--184.

\bibitem{Hatcher}
A.~Hatcher,
\emph{Algebraic Topology},
Cambridge University Press, 2002.

\bibitem{Haussler1995}
D.~Haussler,
\emph{Sphere packing numbers for subsets of the Boolean $n$-cube
with bounded Vapnik--Chervonenkis dimension},
J. Combin. Theory Ser. A \textbf{69} (1995), 217--232.

\bibitem{HornJohnson}
R.~A.~Horn and C.~R.~Johnson,
\emph{Matrix Analysis},
Cambridge University Press, Cambridge, 1985.

\bibitem{KW}
M.~Karoubi and C.~Weibel,
\emph{On the covering type of a space},
Enseign. Math. \textbf{62} (2016), 457--474.
\href{https://arxiv.org/abs/1612.00532}{arXiv:1612.00532}.

\bibitem{Kim}
J.~H.~Kim,
\emph{The Ramsey number $R(3,t)$ has order of magnitude $t^2/\log t$},
Random Structures Algorithms \textbf{7} (1995), 173--207.
\href{https://doi.org/10.1002/rsa.3240070302}{doi:10.1002/rsa.3240070302}.

\bibitem{Matousek}
J.~Matou\v sek,
\emph{Using the Borsuk--Ulam Theorem: Lectures on Topological Methods
in Combinatorics and Geometry}, Universitext, Springer, 2003.

\bibitem{Milnor}
J.~Milnor, \emph{Morse Theory}, Annals of Mathematics Studies 51,
Princeton University Press, 1963.

\bibitem{Milo}
T.~Milo, \emph{On the Figiel--Lindenstrauss--Milman inequality},
preprint (2025),
\href{https://arxiv.org/abs/2504.13571}{arXiv:2504.13571}.

\bibitem{Sauer}
N.~Sauer,
\emph{On the density of families of sets},
J. Combin. Theory Ser. A \textbf{13} (1972), 145--147.
\href{https://doi.org/10.1016/0097-3165(72)90019-2}{doi:10.1016/0097-3165(72)90019-2}.

\bibitem{ST}
G.~Simonyi and G.~Tardos,
\emph{Local chromatic number, Ky Fan's theorem, and circular colorings},
Combinatorica \textbf{26} (2006), 587--626.
\href{https://doi.org/10.1007/s00493-006-0034-x}{doi:10.1007/s00493-006-0034-x}.

\bibitem{Stanley}
R.~P.~Stanley,
\emph{On the number of faces of centrally-symmetric simplicial polytopes},
Graphs Combin. \textbf{3} (1987), 55--66.
\href{https://doi.org/10.1007/BF01788529}{doi:10.1007/BF01788529}.

\end{thebibliography}
\end{document}